\theoremstyle{plain} \newtheorem{theorem}{Theorem}
\newtheorem{lemma}{Lemma}
\numberwithin{equation}{section} \numberwithin{lemma}{section} \numberwithin{theorem}{section}
\def\ps@pprintTitle{%
 \let\@oddhead\@empty
 \let\@evenhead\@empty
 \def\@oddfoot{\centerline{\thepage}}%
 \let\@evenfoot\@oddfoot}
\begin{document}
\begin{frontmatter}

\title{The method of Puiseux series and invariant algebraic curves}

\author{Maria V. Demina}

\address{National Research University Higher School of Economics, 34 Tallinskaya Street, 123458, Moscow, Russian Federation, maria\underline{ }dem@mail.ru}


\begin{abstract}

An explicit expression for the cofactor related to an irreducible invariant
algebraic curve of a polynomial dynamical system in the plane is derived. A
sufficient condition for a polynomial dynamical system in the plane to have a
finite number of irreducible invariant algebraic curves is obtained. All
these results are applied to Li\'{e}nard dynamical systems $x_t=y$,
$y_t=-f(x)y-g(x)$ with $\deg f<\deg g<2\deg f+1$. The general structure of
their irreducible invariant algebraic curves and cofactors   is found. It is
shown that Li\'{e}nard dynamical systems  with $\deg f<\deg g<2\deg f+1$ can
have  at most two distinct irreducible invariant algebraic curves
simultaneously and, consequently, are not integrable with a rational first
integral.
\end{abstract}

\begin{keyword}
invariant algebraic curves, Darboux polynomials,  Li\'{e}nard systems

\end{keyword}

\end{frontmatter}

\section{Introduction}\label{Introduction}

The problem of finding invariant algebraic curves of polynomial dynamical
systems in the plane has twofold interest. Firstly, invariant algebraic
curves produce in explicit form  certain phase curves of related dynamical
systems. In particular, algebraic limit cycles are given by ovals of
invariant algebraic curves. By an oval of an algebraic curve we mean a
connected component lying in the projective plane $\mathbb{R}P^2$ and
diffeomorphic to a circle. Secondly, the complete set of irreducible
invariant algebraic curves is a key object in the study of Darboux and
Liouvillian integrability of dynamical systems \cite{Singer, Christopher02,
Christopher, Ilyashenko, Zhang, Lei01}.

Recently a novel method of finding  invariant algebraic curves, which we
shall call the method of Puiseux series, was introduced \cite{Demina07,
Demina06, Demina12}. This method was used to establish the general structure
of an irreducible invariant algebraic curve for any polynomial dynamical
system in the plane \cite{Demina07}. In this article we present a further
development of the method. We obtain some results concerning uniqueness of
invariant algebraic curves. We derive a sufficient condition for a polynomial
dynamical systems in the plane to have a finite number of irreducible
invariant algebraic curves. Another novel results  is an explicit expression
for the cofactor related to an irreducible invariant algebraic curve. This
expression is of importance in the theory of integrability. Indeed,
investigating the linear dependence or independence of the cofactors related
to irreducible invariant algebraic curves and exponential invariants enables
one to solve the problem of Darboux integrability for a given dynamical
system \cite{Singer, Christopher02, Christopher, Ilyashenko, Zhang}. In
addition, if the polynomial equal to the divergence of the vector field
related to the dynamical system is added to the set of cofactors, then the
Liouvillian integrability or non--integrability can be established
\cite{Singer, Christopher02, Christopher, Ilyashenko, Zhang}.


Further, we apply the method of Puiseux series to famous  Li\'{e}nard
dynamical systems $x_t=y$, $y_t=-f(x)y-g(x)$ having a great number of
physical applications.
 The integrability and limit cycles of  Li\'{e}nard dynamical systems
have been intensively studied in recent years \cite{Odani, Zoladec01,
Llibre05, Llibre07, Zhang02, Llibre04, Liu01, Llibre06, Cheze01, Ignatyev01,
Demina06, Demina07, Demina11}. K. Odani was the first to show that
Li\'{e}nard dynamical systems with $\deg g\leq\deg f$ do not have invariant
algebraic curves excluding some trivial cases \cite{Odani}. Liouvillian
integrability of Li\'{e}nard dynamical systems with $\deg g\leq\deg f$ was
considered by J. Llibre and C. Valls \cite{Llibre07, Llibre06}, see also
\cite{Cheze01}. The study of hyperelliptic limit cycles, i.e. limit cycles
given by invariant algebraic curves of the form $(y+u(x))^2-v(x)=0$ with
$u(x)$ and $v(x)\in\mathbb{R}[x]$, in Li\'{e}nard dynamical systems was begun
by H. \.{Z}o{\l}\c{a}dec \cite{Zoladec01} and was further continued by
J.~Llibre and X. Zhang \cite{Llibre05}, X. Yu and X. Zhang \cite{Zhang02}, C.
Liu, G. Chen, and J.~Yang~\cite{Liu01}.

In this article we consider  Li\'{e}nard dynamical systems with $\deg f<\deg
g<2\deg f+1$. In particular, we give the general structure of their
irreducible invariant  algebraic curves and corresponding cofactors. We prove
that Li\'{e}nard dynamical systems  with $\deg f<\deg g<2\deg f+1$ can have
at most two distinct irreducible invariant algebraic curves simultaneously.
This fact means that Li\'{e}nard dynamical systems in question are not
integrable with a rational first integral. As an example, all irreducible
invariant algebraic curves for a family of  Li\'{e}nard differential system
with $\deg f=2$ and $\deg g=4$ are obtained.
 It turns out that there exist irreducible invariant algebraic curves of more complicated structure than it was supposed before. All these results seem to be new.

This article is organized as follows. In Section \ref{S:Puiseux} we give a
brief review of the method of Puiseux series and present novel results
including the explicit expression for the cofactor of an irreducible
invariant algebraic curve. In Section \ref{S:Lienard} we study Li\'{e}nard
dynamical systems  with $\deg f<\deg g<2\deg f+1$. In Section
\ref{S:Lienard_2_4} we present the classification of irreducible invariant
algebraic curves for a family of  Li\'{e}nard differential system with $\deg
f=2$ and $\deg g=4$.

\section{The method of Puiseux series}\label{S:Puiseux}

Let us consider the following  dynamical system in the plane
\begin{equation}
\begin{gathered}
 \label{DS}
 x_t=P(x,y),\quad y_t=Q(x,y),\quad P(x,y),\quad  Q(x,y)\in \mathbb{C}[x,y].
\end{gathered}
\end{equation}
By $\mathbb{C}[x,y]$ we denote the ring of polynomials in the variables $x$
and $y$ with coefficients in $\mathbb{C}$. The related polynomial vector
field  is defined as
\begin{equation}
\begin{gathered}
 \label{VF}
 \mathcal{X}=P(x,y)\frac{\partial}{\partial x}+Q(x,y)\frac{\partial}{\partial y}.
\end{gathered}
\end{equation}

An algebraic curve $F(x,y)=0$, $F(x,y)\in \mathbb{C}[x,y]\setminus\mathbb{C}$
is an invariant algebraic curve   of the  vector field $\mathcal{X}$ if it
satisfies the following equation $\mathcal{X} F=\lambda(x,y)F$, i.e.
\begin{equation}
\begin{gathered}
 \label{Inx_Eq}
P(x,y)F_x+Q(x,y)F_y=\lambda(x,y) F,
\end{gathered}
\end{equation}
where $\lambda(x,y)\in \mathbb{C}[x,y]$ is  a polynomial called the cofactor
of the invariant curve. It is straightforward to find that the polynomial
$\lambda(x,y)$ is of degree at most $M-1$, where $M$ is the degree of
$\mathcal{X}$: $M=\max\{\deg P, \deg Q\}$. For simplicity let us slightly
abuse notation and call the polynomial $F(x,y)$ satisfying equation
 \eqref{Inx_Eq}  an invariant algebraic curve bearing in mind that in fact the zero set of $F(x,y)$ is under consideration.
Such a polynomial $F(x,y)$ giving an invariant algebraic curve is known as a
Darboux polynomial. Note that we should take only one representative from the
family $cF(x,y)$, where $c\in\mathbb{C}$ and $c\neq0$.

Regarding the variable $y$ as dependent and the variable $x$ as independent,
we find that the function $y(x)$ satisfies the following first--order
algebraic ordinary differential equation
\begin{equation}
\begin{gathered}
 \label{ODE_y}
P(x,y)y_x-Q(x,y)=0.
\end{gathered}
\end{equation}
 It is without loss of generality to suppose that  the
polynomials $P(x,y)$ and $Q(x,y)$ are relatively prime in $\mathbb{C}[x,y]$.
Indeed,
 assuming the converse we shall only obtain a number of additional invariant
 algebraic curves given by the common zeros of  the polynomials $P(x,y)$ and $Q(x,y)$.
 Solutions of equation \eqref{ODE_y} considered as curves in the plane can be viewed as a foliation. Rewriting
 \eqref{ODE_y} as the Pfaffian equation $\omega=0$, where $\omega=Q(x,y)dx-P(x,y)dy$ is the corresponding $1$--form,
 we see that the vector field $\mathcal{X}$ is tangent to the leaves of the foliation. Thus the problem of deriving irreducible invariant algebraic
 curves of the vector field $\mathcal{X}$ is equivalent to the problem of finding algebraic leaves of the corresponding foliation.
For more details on the theory of polynomial foliations
see~\cite{Ilyashenko}.

Our aim is to describe an algebraic approach, which can be used to perform
the classification of irreducible invariant algebraic
 curves. We say that an algebraic curve is irreducible if it is irreducible in the ring $\mathbb{C}[x,y]$. Note that extending the polynomial foliations from the affine plane $\mathbb{C}^2$ to the complex projective plane $\mathbb{C}P^2$,
 one can include into consideration the behavior of foliations at infinity. This extension  can be used to obtain a number of powerful results, see
 \cite{Ilyashenko}. Similarly, the major role in the method of Puiseux series is played by Puiseux series near infinity.

Let us recall  some facts from the theory of fractional power or Puiseux
series. A Puiseux series in a neighborhood of the point $x_0$ is defined as
\begin{equation}
\begin{gathered}
 \label{Puiseux_null}
y(x)=\sum_{l=0}^{+\infty}b_l(x-x_0)^{\frac{l_0+l}{n_0}}
\end{gathered}
\end{equation}
where $l_0\in\mathbb{Z}$, $n_0\in\mathbb{N}$. In its turn a Puiseux series in
a neighborhood of the point $x=\infty$ is given by
\begin{equation}
\begin{gathered}
 \label{Puiseux_inf}
y(x)=\sum_{l=0}^{+\infty}c_lx^{\frac{l_0-l}{n_0}}
\end{gathered}
\end{equation}
where again $l_0\in\mathbb{Z}$, $n_0\in\mathbb{N}$.

It is known \cite{Walker} that a Puiseux series of the form
\eqref{Puiseux_null} that satisfy equation $F(x,y)=0$ with
$F(x,y)\in\mathbb{C}[x,y]$ is convergent in a neighborhood of the point $x_0$
(the point $x_0$ is excluded from domain of convergence if $l_0<0$).
Analogously, a Puiseux series of the form \eqref{Puiseux_inf} that satisfy
equation $F(x,y)=0$ is convergent in a neighborhood of the point $x=\infty$
(the point $x=\infty$ is excluded from domain of convergence if $l_0>0$).  If
$n_0>1$ then the convergence of the corresponding series is understood in the
sense that
 a certain branch
of the $n_0$-th root is chosen and a cut forbidding going around the branch
point is introduced.

The set of all Puiseux series of the form \eqref{Puiseux_null} or
\eqref{Puiseux_inf} forms a field, which we denote by $\mathbb{C}_{x_0}\{x\}$
or $\mathbb{C}_{\infty}\{x\}$ accordingly. In addition, we shall consider the
rings of polynomials in one variable $y$ with coefficients from the fields
$\mathbb{C}_{x_0}\{x\}$ or $\mathbb{C}_{\infty}\{x\}$. They will be referred
to as $\mathbb{C}_{x_0}\{x\}[y]$ or $\mathbb{C}_{\infty}\{x\}[y]$
respectively.

It is straightforward to prove that invariant algebraic curves of the  vector
field $\mathcal{X}$ capture Puiseux series satisfying equation \eqref{ODE_y},
\cite{Demina07}. All the Puiseux series that solve equation  \eqref{ODE_y}
can be obtained with the help
 of the power geometry and the Painlev\'{e}
 methods \cite{Bruno01, Bruno02, Conte03, Goriely}. A brief description of such a method will be given in Appendix.

Let $W(x,y)$ be a formal sum of the monomials $cx^{q_1}y^{q_2}$ with bounded
from above degrees, i.e. $q_1$, $q_2\in\mathbb{Q}$, $q_1+q_2\leq K$. Here $c$
as a constant from the field $\mathbb{C}$.  We introduce the projection
$\{W(x,y)\}_+$ giving the sum of the monomials  of $W(x,y)$ with integer
non--negative powers: $q_1$, $q_2\in\mathbb{N}\cup\{0\}$. In other words,
$\{W(x,y)\}_+$ yields the polynomial part of $W(x,y)$. It is straightforward
to show that the projection is a linear operator.

The following theorem was proved in article \cite{Demina07}.

\begin{theorem}\label{T:Darboux_pols}
Let $F(x,y)\in \mathbb{C}[x,y]\setminus\mathbb{C}$ with $F_y\not\equiv0$ be
an irreducible  invariant algebraic curve of the vector field $\mathcal{X}$
and related dynamical system \eqref{DS}. Then $F(x,y)$ takes the form
\begin{equation}
\begin{gathered}
 \label{General_F}
F(x,y)=\left\{\mu(x)\prod_{j=1}^{N}\left\{y-y_j(x)\right\}\right\}_{+},
\end{gathered}
\end{equation}
where $\mu(x)\in\mathbb{C}[x]$, and  $y_1(x)$, $\ldots$, $y_N(x)$ are
pairwise distinct Puiseux series in a neighborhood of the point $x=\infty$
that satisfy equation \eqref{ODE_y}. Moreover, the degree of $F(x,y)$ with
respect to $y$ does not exceed the number of distinct Puiseux series of the
from \eqref{Puiseux_inf} satisfying equation \eqref{ODE_y} whenever the
latter is finite.
\end{theorem}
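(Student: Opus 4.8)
The plan is to regard $F(x,y)$ as a polynomial in $y$ of degree $N:=\deg_y F$ with coefficients in $\mathbb{C}[x]$, and to factor it completely over the field of Puiseux series near $x=\infty$. Since $F_y\not\equiv0$ we have $N\geq1$, so we may write $F=\mu(x)y^N+\dots$ with leading coefficient $\mu(x)\in\mathbb{C}[x]\setminus\{0\}$. By the theory of Puiseux series (the Newton--Puiseux theorem, see \cite{Walker}), every root in $y$ of $F(x,y)=0$ is a convergent Puiseux series of the form \eqref{Puiseux_inf}, and $F$ splits over $\mathbb{C}_{\infty}\{x\}$ as
\begin{equation*}
F(x,y)=\mu(x)\prod_{j=1}^{N}\left(y-y_j(x)\right),\quad y_j(x)\in\mathbb{C}_{\infty}\{x\}.
\end{equation*}
This already produces the factor structure of \eqref{General_F}; since this product equals the polynomial $F$ identically, the projection $\{\cdot\}_+$ returns it unchanged, so the representation \eqref{General_F} holds with this $\mu(x)$ and these $y_j(x)$.

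First I would verify that the branches $y_j(x)$ are pairwise distinct. Viewing $F$ in $\mathbb{C}(x)[y]$, Gauss's lemma shows it remains irreducible there: irreducibility in $\mathbb{C}[x,y]$ together with $\deg_y F\geq1$ forbids any nonconstant factor depending on $x$ alone, so $F$ is primitive in $\mathbb{C}[x][y]$. As $\mathbb{C}(x)$ has characteristic zero, an irreducible polynomial is separable, whence $F$ has no repeated roots and $y_1(x),\dots,y_N(x)$ are distinct. Equivalently $\gcd(F,F_y)$ is a nonzero constant, so that $F_y(x,y_j)=\mu(x)\prod_{k\neq j}(y_j(x)-y_k(x))\neq0$ for each $j$, a fact needed below.

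The dynamical core is to show that each branch $y_j(x)$ solves equation \eqref{ODE_y}. Substituting $y=y_j(x)$ into the identity $F(x,y_j(x))=0$ and differentiating with respect to $x$ gives $F_x(x,y_j)+F_y(x,y_j)\,y_j'(x)=0$. Evaluating the invariance equation \eqref{Inx_Eq} along the same branch and using $F(x,y_j)=0$ yields $P(x,y_j)F_x(x,y_j)+Q(x,y_j)F_y(x,y_j)=0$. Eliminating $F_x(x,y_j)$ between these two relations produces
\begin{equation*}
F_y(x,y_j)\left\{P(x,y_j)\,y_j'(x)-Q(x,y_j)\right\}=0,
\end{equation*}
and since $F_y(x,y_j)\neq0$ we conclude $P(x,y_j)y_j'(x)-Q(x,y_j)=0$; that is, $y_j(x)$ is a Puiseux series near infinity satisfying \eqref{ODE_y}.

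Finally, the degree bound is immediate: the $N=\deg_y F$ branches $y_1,\dots,y_N$ are pairwise distinct Puiseux series of the form \eqref{Puiseux_inf} solving \eqref{ODE_y}, so $\deg_y F$ cannot exceed the total number of such series whenever that number is finite. The step I expect to be the main obstacle is making the factorization over $\mathbb{C}_{\infty}\{x\}$ fully rigorous---invoking Newton--Puiseux to guarantee that the roots genuinely have the form \eqref{Puiseux_inf} with convergent series---together with the nonvanishing of $F_y$ along each branch, which relies on the separability extracted from irreducibility and is exactly what legitimizes the elimination in the dynamical step.
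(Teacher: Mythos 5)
Your proposal is correct and takes essentially the approach the paper relies on: the paper does not reprove Theorem~\ref{T:Darboux_pols} (it defers to \cite{Demina07}), but the argument there is precisely this one --- factor $F$ over the algebraically closed field $\mathbb{C}_{\infty}\{x\}$ via Newton--Puiseux, and show each branch $y_j(x)$ satisfies \eqref{ODE_y} by differentiating $F(x,y_j(x))=0$ and eliminating $F_x$ against the invariance equation \eqref{Inx_Eq}. Your handling of the two technical points --- pairwise distinctness of the branches via primitivity (Gauss's lemma) and separability in characteristic zero, which gives $F_y(x,y_j)\neq 0$ and legitimizes the elimination --- is exactly what is needed, so the proof is complete.
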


The first result of the present article concerns the cofactor of an
irreducible invariant algebraic curve. Let us prove the following theorem.
\begin{theorem}\label{T_cof}
Let $F(x,y)$ given by  \eqref{General_F} be an irreducible  invariant
algebraic curve of the  vector field $\mathcal{X}$ and related dynamical
system \eqref{DS}. Then the cofactor $\lambda(x,y)$ of $F(x,y)$ takes the
form
\begin{equation}
\begin{gathered}
 \label{General_lambda}
\lambda(x,y)=\left\{P(x,y)\sum_{m=0}^{\infty}\sum_{l=1}^{L}\frac{\nu_lx_l^m}{x^{m+1}}+
\sum_{m=0}^{\infty}\sum_{j=1}^{N}\frac{\{Q(x,y)-P(x,y)y_{j,\,x}\}y_j^m}{y^{m+1}}\right\}_{+},
\end{gathered}
\end{equation}
where $x_1$, $\ldots$, $x_L$ are pairwise distinct zeros of the polynomial
$\mu(x)$ with multiplicities $\nu_1\in\mathbb{N}$, $\ldots$,
$\nu_L\in\mathbb{N}$ and $L\in\mathbb{N}\cup\{0\}$. If $\mu(x)=\mu_0$ where
$\mu_0\in\mathbb{C}$, then we suppose that $L=0$ and the first series is
absent in \eqref{General_lambda}.
\end{theorem}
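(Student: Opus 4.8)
The plan is to start from the defining relation $\mathcal{X}F = \lambda F$, i.e.\ $\lambda = (PF_x + QF_y)/F$, and to compute this quotient by expanding everything in two complementary ways: as a Laurent/Puiseux series near $x=\infty$ (to capture the $\mu(x)$-contribution, the first double sum) and as a Laurent/Puiseux series in $y$ (to capture the product-over-roots contribution, the second double sum). The point is that $\lambda$ is a genuine polynomial of degree at most $M-1$, so it equals its own polynomial part $\{\cdot\}_+$; the content of the theorem is that the two series displayed in \eqref{General_lambda} reproduce $PF_x/F$ and $QF_y/F$ respectively up to terms with non-integer or negative powers that the projection annihilates.

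Concretely, **first I would** take the logarithmic derivative of the factored form. Writing $F$ as the polynomial part of $\mu(x)\prod_{j=1}^N\{y-y_j(x)\}$, the key identity is that on the zero set the logarithmic derivative behaves like
\begin{equation}
\frac{\mathcal{X}F}{F}=P\,\frac{\mu_x}{\mu}+\sum_{j=1}^N\frac{Q-P\,y_{j,x}}{\,y-y_j\,},
\end{equation}
since $\mathcal{X}(y-y_j)=Q-P\,y_{j,x}$ and $\mathcal{X}\mu = P\mu_x$. **Then I would** expand each piece. For the first term, $\mu_x/\mu=\sum_{l=1}^L \nu_l/(x-x_l)$ by the factorization $\mu(x)=\mu_0\prod_l (x-x_l)^{\nu_l}$, and each $1/(x-x_l)$ expands near infinity as $\sum_{m\ge0} x_l^m/x^{m+1}$, giving the first double sum multiplied by $P$. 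For the second term, each $1/(y-y_j)$ expands for large $y$ as $\sum_{m\ge0} y_j^m/y^{m+1}$, and multiplying by the polynomial $Q-P\,y_{j,x}$ gives the second double sum. Summing over $j$ and applying $\{\cdot\}_+$ recovers \eqref{General_lambda}.

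**The main obstacle**, and the place requiring genuine care rather than routine algebra, is justifying that applying the projection $\{\cdot\}_+$ term by term to these two formal series actually yields the polynomial $\lambda$, and not merely $\lambda$ plus spurious pieces. The subtlety is that $\lambda = \mathcal{X}F/F$ is exactly polynomial, whereas neither $P\mu_x/\mu$ nor the $y$-series is individually a polynomial; the non-polynomial tails must cancel between the two groups or vanish under the projection. I would control this by a degree/valuation argument: the cofactor has degree at most $M-1$, so only finitely many monomials survive, and one checks that every monomial of $\lambda$ with non-negative integer bi-degree is captured by the finite truncation of the double series while all other contributions carry negative powers of $x$ (from the $1/x^{m+1}$ factors), negative powers of $y$ (from the $1/y^{m+1}$ factors), or fractional exponents inherited from the Puiseux series $y_j(x)$, all of which $\{\cdot\}_+$ discards. **Finally I would** dispose of the degenerate case $\mu(x)=\mu_0$ separately: here $\mu_x\equiv0$, so the first series simply does not occur, consistent with the convention $L=0$ stated in the theorem.
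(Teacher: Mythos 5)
Your proposal is correct and follows essentially the same route as the paper: factor $F$ over $\mathbb{C}_{\infty}\{x\}$ as $\mu(x)\prod_{j}\{y-y_j(x)\}$, take the logarithmic derivative so that $\lambda=PF_x/F+QF_y/F=P\mu_x/\mu+\sum_{j}(Q-Py_{j,x})/(y-y_j)$, expand $1/(x-x_l)$ and $1/(y-y_j)$ as geometric series near infinity, and conclude by noting that $\lambda$ is a polynomial and hence equals the polynomial part $\{\cdot\}_+$ of the resulting series. Your extra discussion of why the projection discards exactly the non-polynomial tail is a harmless elaboration of the paper's closing remark, not a different argument.
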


\begin{proof}
We begin the proof  representing the irreducible invariant algebraic curve
$F(x,y)$ in the ring $\mathbb{C}_{\infty}\{x\}[y]$  as follows
\begin{equation}
\begin{gathered}
 \label{General_F_new}
F(x,y)=\mu(x)\prod_{j=1}^{N}\left\{y-y_j(x)\right\}.
\end{gathered}
\end{equation}
The polynomial $\mu(x)$ takes the form
\begin{equation}
\begin{gathered}
 \label{General_mu}
\mu(x)=\prod_{l=1}^{L}(x-x_l)^{\nu_l},\quad \nu_l\in\mathbb{N},\quad L\in\mathbb{N}\cup\{0\}.
\end{gathered}
\end{equation}
If  $\mu(x)=\mu_0$ where $\mu_0\in\mathbb{C}$, then we set $L=0$. From
\eqref{General_F_new} it follows that the  correlations
\begin{equation}
\begin{gathered}
 \label{General_F_der1}
\frac{F_x}{F}=\frac{\mu_x}{\mu}-\sum_{j=1}^{N}\frac{y_{j,\,x}}{y-y_{j}};\quad \frac{F_y}{F}=\sum_{j=1}^{N}\frac{1}{y-y_{j}}
\end{gathered}
\end{equation}
are valid. Further, we find the Laurent series of the functions $F_x/F$ and
$F_y/F$ near the point $y=\infty$. Substituting the resulting expressions
into \eqref{Inx_Eq}, we get
\begin{equation}
\begin{gathered}
 \label{General_lambda1}
\lambda(x,y)=P(x,y)\sum_{m=0}^{\infty}\sum_{l=1}^{L}\frac{\nu_lx_l^m}{x^{m+1}}+
\sum_{m=0}^{\infty}\sum_{j=1}^{N}\frac{\{Q(x,y)-P(x,y)y_{j,\,x}\}y_j^m}{y^{m+1}}
\end{gathered}
\end{equation}
where the function
\begin{equation}
\begin{gathered}
 \label{General_mu}
\frac{\mu_x}{\mu}=\sum_{l=1}^{L}\frac{\nu_l}{x-x_l}
\end{gathered}
\end{equation}
is also expanded into the Laurent series near the point $x=\infty$ Finally,
we recall that $\lambda(x,y)$ is a bivariate polynomial. This yields
\eqref{General_lambda}.

\end{proof}

Note that the roles of $x$ and $y$ in Theorems \ref{T:Darboux_pols} and
\ref{T_cof} can be changed. In addition, representations \eqref{General_F}
and \eqref{General_lambda} for an invariant algebraic curve and its cofactor
are still valid if the requirement of irreducibility is removed. But in such
a case there may arise coinciding Puiseux series in these representations.

Now let us show that the degree of the polynomial $\mu(x)$, which is the
coefficient at the highest--order term $y^N$ in the explicit expression of
$F(x,y)$, can be deduced with the help of  Puiseux series of the form
\eqref{Puiseux_null} that satisfy equation \eqref{ODE_y}.

\begin{lemma}\label{L_mu}
Let $F(x,y)\in \mathbb{C}[x,y]\setminus\mathbb{C}$ with $F_y\not\equiv0$ be
an irreducible  invariant algebraic curve of  the vector field $\mathcal{X}$
and related dynamical system \eqref{DS}. If $x_0\in\mathbb{C}$ is a zero of
the polynomial $\mu(x)$, then at least one Puiseux series of the form
\eqref{Puiseux_null} with $l_0<0$ satisfies equation \eqref{ODE_y}.

\end{lemma}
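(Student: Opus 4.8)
My plan is to identify $\mu(x)$ as the leading coefficient of $F$ in the variable $y$, and then to show that the vanishing of this coefficient at $x_0$ forces one of the $y$-branches of the curve $F=0$ to tend to infinity as $x\to x_0$; such a branch is exactly a Puiseux series of the form \eqref{Puiseux_null} with $l_0<0$, and it solves \eqref{ODE_y} because every branch of an invariant algebraic curve does.

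First I would observe that in the representation \eqref{General_F} the polynomial $\mu(x)$ is nothing but the coefficient of the top power $y^N$ in $F(x,y)$. This is immediate from the factorization $F=\mu(x)\prod_{j=1}^N\{y-y_j(x)\}$ valid in $\mathbb{C}_{\infty}\{x\}$ (as used in the proof of Theorem \ref{T_cof}): it exhibits $F$ as a polynomial of degree $N$ in $y$ whose leading coefficient is precisely $\mu(x)$.

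Next I would localize at $x_0$. The field $\mathbb{C}_{x_0}\{x\}$ of Puiseux series at $x_0$ is algebraically closed, so over it $F$ splits as $F(x,y)=\mu(x)\prod_{i=1}^N\{y-\eta_i(x)\}$, with each $\eta_i$ a series of the form \eqref{Puiseux_null}. The decisive step is to prove that at least one $\eta_i$ has a negative leading exponent. Assuming the contrary, every $\eta_i$ would have leading exponent $\geq 0$ and hence a finite limit $\eta_i(x_0)\in\mathbb{C}$; evaluating the factorization at $x=x_0$ would then give $F(x_0,y)=\mu(x_0)\prod_{i=1}^N\{y-\eta_i(x_0)\}\equiv 0$, since $\mu(x_0)=0$. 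This contradicts $F(x_0,y)\not\equiv 0$, which holds because an irreducible $F$ with $F_y\not\equiv0$ cannot be divisible by $(x-x_0)$. Therefore some branch $\eta_{i_0}$ has a negative leading exponent, i.e. $l_0<0$ in \eqref{Puiseux_null}.

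Finally, $\eta_{i_0}$ is a branch of the invariant algebraic curve $F=0$. Differentiating $F(x,\eta_{i_0})=0$ and combining with $PF_x+QF_y=\lambda F=0$ along the curve gives $F_y(Q-P\,\eta_{i_0,\,x})=0$; since $F_y\not\equiv0$ on an irreducible curve, $\eta_{i_0}$ satisfies \eqref{ODE_y}, which is the capture property recalled before Theorem \ref{T:Darboux_pols}. Thus $\eta_{i_0}$ is the required Puiseux series. I expect the main obstacle to be phrasing the evaluation-at-$x_0$ argument rigorously, that is, making precise that a zero of the leading coefficient $\mu$ corresponds exactly to a polar branch of $F=0$; this is ultimately a valuation statement and can equivalently be read off from the Vieta relations expressing $a_{N-k}(x)/\mu(x)$ as symmetric functions of the $\eta_i$.
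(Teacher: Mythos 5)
Your proof is correct and follows essentially the same route as the paper's: factor $F$ over the algebraically closed field $\mathbb{C}_{x_0}\{x\}$, and obtain a contradiction with irreducibility from the assumption that every branch has non-negative leading exponent. The differences are only expository --- you make the reducibility step explicit (evaluation at $x=x_0$ shows $(x-x_0)$ divides $F$) and prove the capture property $Q-P\,\eta_x=0$ inline via the chain rule and coprimality of $F$ and $F_y$, whereas the paper asserts the reducibility directly and cites \cite{Demina05} for the capture property.
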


\begin{proof}
Considering the field $\mathbb{C}_{x_0}\{x\}$, which is algebraically closed
\cite{Walker}, we see that there exists uniquely determined system of
elements $y_{j,\,x_0}(x)\in \mathbb{C}_{x_0}\{x\}$ such that the following
representation is valid~\cite{Walker}
\begin{equation}
\begin{gathered}
 \label{L_mu_1}
F(x,y)=\mu(x)\prod_{j=1}^N\{y-y_{j,\,x_0}(x)\},
\end{gathered}
\end{equation}
where $N$ as before is the degree of $F(x,y)$ with respect to $y$. The set of
elements $y_{j,\,x_0}(x)\in \mathbb{C}_{x_0}\{x\}$ appearing in
representation \eqref{L_mu_1} is a subset of those satisfying
equation~\eqref{ODE_y}, see \cite{Demina07}. Let us assume that  all the
Puiseux series of the form  \eqref{Puiseux_null} that satisfy equation
\eqref{ODE_y} have $l_0\geq0$. Thus all the series in representation
\eqref{L_mu_1} also satisfy this condition. Since $x_0$ is a zero of
$\mu(x)$, we see that $F(x,y)$ given by \eqref{L_mu_1} is reducible. It is a
contradiction. Consequently,
at least one   Puiseux series near the point $x_0$ with negative exponent at the leading--order term satisfies equation \eqref{ODE_y}.\\

\end{proof}

\textit{Consequence}. If for all $ x_0\in\mathbb{C}$ the Puiseux series of
the form  \eqref{Puiseux_null} satisfying equation \eqref{ODE_y} have
non--negative exponents at the leading--order terms, then $\mu(x)=\mu_0$
where $\mu_0$ is a constant. Without loss of generality, one can set
$\mu_0=1$.

\begin{lemma}\label{L_mu2}
Suppose there exists finite number of Puiseux series of the form
\eqref{Puiseux_null} that satisfy equation \eqref{ODE_y} and have negative
exponents at the leading--order terms:
\begin{equation}
\begin{gathered}
 \label{L_mu2_1}
y_{j,\,x_0}(x)=b_0^{(j)}(x-x_0)^{-q_j}+\ldots, \quad b_0^{(j)}\neq0,\\
 q_j\in\mathbb{Q},\quad q_j>0,\quad 1\leq j\leq J\in\mathbb{N}.
\end{gathered}
\end{equation}
If $F(x,y)$ is an irreducible invariant algebraic curve of the  vector field
$\mathcal{X}$ and related dynamical system \eqref{DS}, then the multiplicity
$\nu_0\in\mathbb{N}$ of the zero of the polynomial
 $\mu(x)$ at the point $x_0$ satisfies the inequality
\begin{equation}
\begin{gathered}
 \label{L_mu2_2}
\nu_0\leq \sum_{j=1}^Jq_j.
\end{gathered}
\end{equation}

\end{lemma}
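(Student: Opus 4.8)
The plan is to work in the algebraically closed field $\mathbb{C}_{x_0}\{x\}$ and to exploit the same factorization used in the proof of Lemma \ref{L_mu}. First I would write $F$ as in \eqref{L_mu_1}, namely $F(x,y)=\mu(x)\prod_{j=1}^{N}\{y-y_{j,\,x_0}(x)\}$, and expand it as a polynomial in $y$, $F(x,y)=\sum_{k=0}^{N}a_k(x)y^{N-k}$, with $a_0(x)=\mu(x)$ and $a_k(x)=(-1)^k\mu(x)\,e_k(y_{1,\,x_0},\ldots,y_{N,\,x_0})$, where $e_k$ denotes the $k$-th elementary symmetric function. Recalling that the series $y_{j,\,x_0}(x)$ appearing here form a subset of those solving \eqref{ODE_y}, the ones among them with negative leading exponent constitute a subset $S$ of the $J$ series listed in \eqref{L_mu2_1}; I denote $J'=|S|$ and write $-q_i$, $i\in S$, for their leading exponents, so that $\sum_{i\in S}q_i\leq\sum_{j=1}^{J}q_j$. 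It therefore suffices to prove the sharp identity $\nu_0=\sum_{i\in S}q_i$.

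Next I would introduce the valuation $v_{x_0}$ on $\mathbb{C}_{x_0}\{x\}$ reading off the leading exponent in $(x-x_0)$, so that $v_{x_0}(\mu)=\nu_0$, while $v_{x_0}(y_{i,\,x_0})=-q_i<0$ for $i\in S$ and $v_{x_0}(y_{j,\,x_0})\geq0$ otherwise. The first key step is to compute $v_{x_0}(e_{J'})$: among all products of $J'$ distinct roots, the unique one of minimal valuation is $\prod_{i\in S}y_{i,\,x_0}$, obtained by selecting precisely the $J'$ pole-branches, and its leading coefficient $\prod_{i\in S}b_0^{(i)}$ is nonzero, so no cancellation occurs and $v_{x_0}(e_{J'})=-\sum_{i\in S}q_i$ exactly. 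The second key step is to identify $J'$ geometrically: as $x\to x_0$ exactly the pole-branches escape to $y=\infty$, so their number equals the drop in $y$-degree, $J'=N-d$ with $d=\deg_y F(x_0,y)$. This is precisely the statement that $a_{J'}(x)$, being the leading coefficient of $F(x_0,y)$ in $y$, does not vanish at $x_0$, i.e. $v_{x_0}(a_{J'})=0$, whereas $a_k(x_0)=0$ for $k<J'$.

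Combining the two steps would finish the argument: from $a_{J'}(x)=(-1)^{J'}\mu(x)\,e_{J'}$ and the additivity of $v_{x_0}$ I obtain $0=v_{x_0}(a_{J'})=\nu_0+v_{x_0}(e_{J'})=\nu_0-\sum_{i\in S}q_i$, hence $\nu_0=\sum_{i\in S}q_i\leq\sum_{j=1}^{J}q_j$, which is \eqref{L_mu2_2}. The main obstacle I anticipate is the second key step: justifying rigorously that the count of pole-branches equals the degree drop and therefore that $a_{J'}(x_0)\neq0$. This requires arguing that the $N-d$ branches tending to infinity are in bijection with the roots lost when the leading coefficient $\mu$ vanishes, and that no branch of nonnegative valuation contributes to this count. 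The finiteness hypothesis on the number of pole-series guarantees that $S$ and the bound $\sum_{j=1}^{J}q_j$ are well defined, while Lemma \ref{L_mu} ensures $S\neq\emptyset$ whenever $\nu_0\geq1$, so that the identity is non-vacuous.
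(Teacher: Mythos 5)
Your first key step is correct, and in fact it sharpens the paper's informal "compensation" language into a precise statement: the no-cancellation computation $v_{x_0}(e_{J'})=-\sum_{i\in S}q_i$ is exactly the rigorous content of the claim that $(x-x_0)^{\nu_0}$ must absorb the negative exponents. The genuine gap is your second key step, the one you yourself flag as the main obstacle, and it cannot be closed by the branch-counting/bijection argument you outline, because the claim $a_{J'}(x_0)\neq 0$ is simply \emph{false} without irreducibility: take $F=(x-x_0)y$, so $N=1$, $\mu=x-x_0$, the single root $y_1\equiv 0$ has no pole at $x_0$, hence $J'=0$ and $a_{J'}=a_0=\mu$ vanishes at $x_0$ (and indeed $\nu_0=1>0=\sum_{i\in S}q_i$). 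Likewise the continuity-of-roots picture ("exactly the pole-branches escape to $y=\infty$, so their number equals the degree drop") presupposes $F(x_0,y)\not\equiv 0$, i.e.\ $(x-x_0)\nmid F$, which is again irreducibility. So the decisive hypothesis of the lemma never enters your outline at the point where it is indispensable; this is precisely the ingredient the paper's proof turns on (its punchline is that $F$ given by \eqref{L_mu_1} would be \emph{reducible} if \eqref{L_mu2_2} failed).

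The good news is that your own first step closes the gap in one line, with the logical order reversed relative to your plan. For every $k$ one has $v_{x_0}(e_k)\geq-\sum_{i\in S}q_i$ (any product of $k$ distinct roots contains at most the poles in $S$, all other factors having nonnegative valuation), hence $v_{x_0}(a_k)=\nu_0+v_{x_0}(e_k)\geq \nu_0-\sum_{i\in S}q_i$ for all $k$, with equality at $k=J'$ by your no-cancellation computation. If $\nu_0>\sum_{i\in S}q_i$, then every coefficient $a_k\in\mathbb{C}[x]$ vanishes at $x_0$, so $(x-x_0)$ divides $F$ in $\mathbb{C}[x,y]$, contradicting irreducibility since $\deg_y F=N\geq 1$. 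Thus $\nu_0\leq\sum_{i\in S}q_i\leq\sum_{j=1}^{J}q_j$, which is the lemma; combining with polynomiality ($v_{x_0}(a_{J'})\geq 0$) gives your sharp identity $\nu_0=\sum_{i\in S}q_i$, and only then does the degree-drop statement $\deg_y F(x_0,y)=N-J'$ follow as a corollary rather than serve as an ingredient. One further small point: the inequality $\sum_{i\in S}q_i\leq\sum_{j}q_j$ tacitly requires that each pole series enters \eqref{L_mu_1} at most once (a subset, not a multiset); this too comes from irreducibility (separability in characteristic zero forces the roots $y_{j,\,x_0}$ to be pairwise distinct), a point the paper states explicitly and your write-up should as well.
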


\begin{proof}
Again we consider the representation of $F(x,y)$ in the ring
$\mathbb{C}_{x_0}\{x\}[y]$, which is given by  \eqref{L_mu_1}. The Puiseux
series appearing in this representation  satisfy equation~\eqref{ODE_y}, see
\cite{Demina07}. Rewriting the polynomial $\mu(x)$ as
$\mu(x)=\mu_0(x-x_0)^{\nu_0}+\ldots$, where $\mu_0\neq0$, we see that the
factor $(x-x_0)^{\nu_0}$ should compensate the terms with negative exponents
in such a way that the resulting expression is an irreducible polynomial. The
multiplicity of $\mu(x)$ at its zero $x_0$ is the highest if all the series
given by  \eqref{L_mu2_1} are captured by representation \eqref{L_mu_1} and
other Puiseux series in \eqref{L_mu_1} have non--zero  coefficients at
$(x-x_0)^0$. Since $F(x,y)$ is irreducible, we see that each series
$y_{j,\,x_0}(x)$ given by  \eqref{L_mu2_1} can enter representation
\eqref{L_mu_1} at most once. Assuming that condition \eqref{L_mu2_2} is not
satisfied, we come to a contradiction. Indeed,  the polynomial $F(x,y)$ given
by \eqref{L_mu_1} is reducible whenever condition \eqref{L_mu2_2} is not
satisfied. This completes the proof.

\end{proof}

Now we shall study uniqueness properties of irreducible invariant algebraic
curves of the vector field $\mathcal{X}$.

\begin{theorem}\label{T:uniqueness}
Suppose a Puiseux series $y(x)$ of the form \eqref{Puiseux_null} or
\eqref{Puiseux_inf} with uniquely determined coefficients satisfies equation
\eqref{ODE_y}; then there exists at most one irreducible invariant algebraic
curve $F(x,y)$  of  the  vector field $\mathcal{X}$ and related dynamical
system \eqref{DS} such that this series is captured by $F(x,y)$, i.e.
$F(x,y(x))=0$.
\end{theorem}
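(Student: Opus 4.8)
The plan is to argue by contradiction. Suppose that two distinct irreducible invariant algebraic curves $F_1(x,y)$ and $F_2(x,y)$ both capture the given series, so that $F_1(x,y(x))=0$ and $F_2(x,y(x))=0$ hold in the appropriate field of Puiseux series ($\mathbb{C}_{x_0}\{x\}$ or $\mathbb{C}_{\infty}\{x\}$, according to the form of $y(x)$). Here \emph{distinct} is meant up to the normalization of the family $cF$ fixed earlier, so $F_1$ and $F_2$ are not scalar multiples of one another. Since each curve captures a series in which $y$ is expressed through $x$, both $F_1$ and $F_2$ must actually depend on $y$ (otherwise $F_i(x,y(x))$ would reduce to a nonzero polynomial in $x$), hence both have positive degree in $y$.

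The main tool I would use is the resultant with respect to $y$. Because $F_1$ and $F_2$ are distinct irreducible elements of $\mathbb{C}[x,y]$, they share no non-constant common factor, and in particular no common factor of positive degree in $y$. Viewing them as polynomials in $y$ over the domain $\mathbb{C}[x]$, elimination theory then guarantees that $R(x):=\mathrm{Res}_y(F_1,F_2)\in\mathbb{C}[x]$ is not identically zero. I would combine this with the Bézout-type property of the resultant: there exist $A(x,y),B(x,y)\in\mathbb{C}[x,y]$ such that
\[
R(x)=A(x,y)\,F_1(x,y)+B(x,y)\,F_2(x,y),
\]
an identity in the ring $\mathbb{C}[x,y]$.

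The contradiction then comes from substituting $y=y(x)$ into this identity, that is, from applying the evaluation homomorphism $\mathbb{C}[x,y]\to\mathbb{C}_{\infty}\{x\}$ (respectively $\mathbb{C}_{x_0}\{x\}$) fixing $x$ and sending $y\mapsto y(x)$. The hypothesis that $y(x)$ has uniquely determined coefficients is precisely what ensures that $y(x)$ is a single, well-defined element of the Puiseux field, so that this substitution is unambiguous and the relations $F_1(x,y(x))=F_2(x,y(x))=0$ are genuine identities there. Under the homomorphism the right-hand side vanishes, so $R(x)=0$ as an element of the Puiseux field. But $R(x)$ lies in $\mathbb{C}[x]$, which embeds into the Puiseux field, and a nonzero polynomial in $x$ cannot equal the zero series; hence $R(x)\equiv0$, contradicting its non-vanishing. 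Therefore $F_1$ and $F_2$ cannot be distinct, which is the assertion.

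I expect the main obstacle to be the careful justification that $R(x)\not\equiv0$: one must check that $F_1$ and $F_2$, coprime in $\mathbb{C}[x,y]$, remain coprime as polynomials in $y$ over $\mathbb{C}(x)$ (via Gauss's lemma) and that both have positive degree in $y$, so that the resultant is meaningful and detects a common root in an algebraically closed extension. A secondary point, easily handled, is the uniform treatment of the cases $x_0\in\mathbb{C}$ and $x=\infty$: the argument uses only that the relevant Puiseux series form a field containing $\mathbb{C}[x]$ in which a nonzero polynomial remains nonzero, and this holds in both $\mathbb{C}_{x_0}\{x\}$ and $\mathbb{C}_{\infty}\{x\}$, so no case distinction is required.
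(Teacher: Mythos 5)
Your proof is correct, but it takes a genuinely different route from the paper's. The paper also argues by contradiction, yet geometrically: it uses the convergence of the Puiseux series (so that the two curves $F_1=0$ and $F_2=0$ share infinitely many points of $\mathbb{C}^2$ inside the domain of convergence) and then invokes B\'ezout's theorem, which bounds the number of intersection points of two curves without a common component; a common factor follows, and irreducibility forces $F_2=cF_1$. Your argument replaces this analytic--geometric step with pure elimination theory: distinct irreducibles are coprime, hence (via Gauss's lemma, both having positive degree in $y$) the resultant $R(x)=\mathrm{Res}_y(F_1,F_2)$ is a nonzero element of $\mathbb{C}[x]$; the identity $R=AF_1+BF_2$ in $\mathbb{C}[x,y]$ then collapses to $R(x)=0$ under the evaluation $y\mapsto y(x)$ in the Puiseux field, a contradiction since $\mathbb{C}[x]$ embeds into that field. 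What your approach buys: it is entirely formal, never using that Puiseux series solutions of algebraic equations converge (the paper's proof leans on this fact, cited from Walker), and it treats the cases $x_0\in\mathbb{C}$ and $x=\infty$ uniformly, needing only the field structure of $\mathbb{C}_{x_0}\{x\}$ or $\mathbb{C}_{\infty}\{x\}$. What the paper's approach buys: brevity and geometric transparency---one sees directly that two distinct irreducible curves cannot meet along an entire branch. Both proofs use the hypothesis of uniquely determined coefficients in the same implicit way, namely to fix $y(x)$ as a single well-defined element of the Puiseux field.
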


\begin{proof}
The proof is by contradiction. Let us suppose that the  vector field
$\mathcal{X}$ and related dynamical system \eqref{DS} possess at least two
distinct irreducible invariant algebraic curves given by the polynomials
$F_1(x,y)$ and $F_2(x,y)$ such that $F_1(x,y(x)) =0$ and $F_2(x,y(x))=0$. We
see that these two curves intersect in an infinite number of points inside
the domain of convergence of the series $y(x)$. It follows from the
B\'{e}zout's theorem that there exists a polynomial both dividing $F_1(x,y)$
and $F_2(x,y)$. The polynomial $F_1(x,y)$ is irreducible. Thus we conclude
that $F_2(x,y)=h(x,y)F_1(x,y)$ , where $h(x,y)\in\mathbb{C}[x,y]$. But the
polynomial $F_2(x,y)$ is also irreducible. Consequently, $h\in\mathbb{C}$ and
the polynomials  $F_1(x,y)$ and $F_2(x,y)$ define the same curve. It is a
contradiction.

\end{proof}

\begin{theorem}\label{T:uniqueness2} If the number of Puiseux series of the form \eqref{Puiseux_inf} that satisfy equation  \eqref{ODE_y} is finite, then the  vector field $\mathcal{X}$
and related dynamical system \eqref{DS} possess a finite number (possibly
none) of irreducible invariant algebraic curves and, consequently, cannot
have a rational first integral. Moreover, the number of distinct irreducible
invariant algebraic curves $F(x,y)$ such that $F_y\not\equiv 0$ does not
exceed the number of distinct  Puiseux series of the form \eqref{Puiseux_inf}
that satisfy equation  \eqref{ODE_y}.
\end{theorem}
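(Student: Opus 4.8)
The plan is to deduce everything from the structural Theorem~\ref{T:Darboux_pols} and the uniqueness Theorem~\ref{T:uniqueness}. Write $K$ for the finite number of distinct Puiseux series of the form \eqref{Puiseux_inf} solving \eqref{ODE_y}. I would first observe that finiteness forces each of these $K$ series to have uniquely determined coefficients: a series carrying a free parameter would generate a one--parameter family of solutions and hence infinitely many series, so Theorem~\ref{T:uniqueness} is applicable to every one of them. Next I would treat the curves with $F_y\not\equiv0$. By Theorem~\ref{T:Darboux_pols} any such irreducible invariant algebraic curve can be written as $F=\{\mu(x)\prod_{j=1}^{N}(y-y_j(x))\}_{+}$ with $N\geq1$, where each $y_j(x)$ is one of our $K$ series and $F(x,y_j(x))=0$; thus $F$ captures at least one series of the form \eqref{Puiseux_inf}. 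Assigning to each such curve $F_i$ one captured series, I note that if two distinct curves received the same series then Theorem~\ref{T:uniqueness} would force them to coincide, a contradiction; hence the assignment is injective into a set of size $K$. This yields the bound of the \emph{Moreover} clause and, in particular, finitely many irreducible invariant algebraic curves with $F_y\not\equiv0$.

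Next I would dispose of the curves with $F_y\equiv0$. Such an $F$ lies in $\mathbb{C}[x]\setminus\mathbb{C}$ and, being irreducible, equals $x-x_0$ up to a constant. Substituting into \eqref{Inx_Eq} gives $P(x,y)=\lambda(x,y)(x-x_0)$, so $(x-x_0)\mid P(x,y)$. Since the standing hypothesis that $P$ and $Q$ share no non--constant factor forces $P\not\equiv0$, only finitely many $x_0$ can occur, namely the common zeros of the coefficients of $P$ regarded as a polynomial in $y$. Adding these finitely many vertical lines to the at most $K$ curves of the previous paragraph shows that the total number of irreducible invariant algebraic curves is finite.

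Finally, the non--existence of a rational first integral should follow from finiteness by the classical pencil argument, which is the one step not supplied by the earlier results and hence the main point to get right. If $H=A/B$ with $\gcd(A,B)=1$ were a rational first integral, then $\mathcal{X}(A)/A=\mathcal{X}(B)/B$ is a common polynomial cofactor, so every member $\alpha A+\beta B$ of the pencil, $[\alpha:\beta]\in\mathbb{C}P^1$, is an invariant algebraic curve. By Bertini's theorem all but finitely many members of this pencil are irreducible, producing infinitely many distinct irreducible invariant algebraic curves and contradicting the finiteness just established. Therefore no rational first integral exists. The only genuinely delicate point is the generic irreducibility of the pencil; the counting in the first two paragraphs is immediate once Theorems~\ref{T:Darboux_pols} and~\ref{T:uniqueness} are in hand.
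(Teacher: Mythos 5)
Your two counting steps are correct and, in substance, follow the paper's own route. The treatment of curves with $F_y\equiv0$ (substituting $F=x-x_0$ into \eqref{Inx_Eq} to get the divisibility $P=\lambda\,(x-x_0)$) is exactly the paper's first step. For curves with $F_y\not\equiv0$, your injection from curves into the finite set of series of the form \eqref{Puiseux_inf} is a cleaner formalization of what the paper argues more loosely: the paper says such an $F$ ``cannot involve an arbitrary parameter'' and then appeals to Theorem \ref{T:uniqueness}, whereas you make the bound explicit by assigning to each curve a captured series (legitimate, since in $\mathbb{C}_{\infty}\{x\}$ the polynomial $F$ factors as $\mu(x)\prod_j(y-y_j(x))$, so $F(x,y_j(x))=0$ for each root) and noting the assignment is injective by Theorem \ref{T:uniqueness}. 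Your preliminary remark that finiteness forces each series to have uniquely determined coefficients --- the hypothesis needed to invoke Theorem \ref{T:uniqueness} --- is also correct and is used only implicitly in the paper.

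The genuine gap is in your final step, and it is precisely the point you flagged. Bertini's irreducibility theorem does \emph{not} say that all but finitely many members of an arbitrary pencil are irreducible: there is an exception when the pencil is composite (equivalently, when $H=A/B$ is not a primitive first integral), and coprimality of $A$ and $B$ does not rule this out. Concretely, for $\mathcal{X}=x\partial_x+y\partial_y$ the function $H=x^2/y^2$ is a rational first integral with $\gcd(x^2,y^2)=1$, yet every member $\alpha x^2+\beta y^2$ with $\alpha\beta\neq0$ splits into two lines, so ``all but finitely many members are irreducible'' is false for this pencil. Two standard repairs: (i) first replace $H$ by a primitive rational first integral (every rational first integral is a function of a primitive one), to which Bertini does apply; or, lighter, (ii) drop irreducibility of the members altogether and use the standard lemma that every irreducible factor of an invariant algebraic curve is itself an invariant algebraic curve with polynomial cofactor: distinct members of the pencil are pairwise coprime (a common irreducible factor would divide both $A$ and $B$), so picking one irreducible factor from each of the infinitely many non-constant members still yields infinitely many distinct irreducible invariant algebraic curves, contradicting the finiteness you already established. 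The paper itself sidesteps all of this with the terse observation that a rational first integral forces \emph{all} integral curves to be algebraic, hence infinitely many invariant algebraic curves; with repair (i) or (ii) inserted, your version of this step also closes, and the rest of your proof stands.
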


\begin{proof}
It is straightforward to show that the number of  irreducible invariant
algebraic curves that do not depend on $y$ is finite. Indeed, substituting
$F=x-x_0$, $x_0\in\mathbb{C}$ into equation \eqref{Inx_Eq}, we get
$P(x,y)=\lambda(x,y)(x-x_0)$. It follows from this relation that there may
exist only a finite number of admissible values of the parameter~$x_0$.

Further, recalling the Newton--Puiseux algorithm of finding  Puiseux series
in explicit form \cite{Walker}, we see that polynomials $F(x,y)$ representing
irreducible invariant algebraic curves in the case $F_y\not\equiv0$  cannot
involve  an arbitrary parameter. Indeed, if such a parameter existed, then it
would enter Puiseux series appearing in representation \eqref{General_F}. But
all these series satisfy equation \eqref{ODE_y} and  have no arbitrary
parameters according to the statement of the theorem. Note that  we do not
take into consideration the parameter $c$ resulting from the equivalence
$F\sim cF$. It follows from the previous observations and Theorem
\ref{T:uniqueness} that the number of distinct irreducible invariant
algebraic curves is finite. This number is the highest if all the  Puiseux
series of the form \eqref{Puiseux_inf} satisfying equation  \eqref{ODE_y} are
Laurent series that terminate at zero term. In such a case this number is
equal to the number of distinct  Puiseux series of the form
\eqref{Puiseux_inf} that satisfy equation  \eqref{ODE_y}. Finally, supposing
that  the  vector field $\mathcal{X}$ and related dynamical system \eqref{DS}
are integrable with a rational first integral, we come to a contradiction.
Indeed, in such a case  all (not some) integral curves are algebraic.

\end{proof}

\textit{Remark.} Similar theorem is valid if Puiseux series of the form
\eqref{Puiseux_null} that satisfy equation  \eqref{ODE_y} are considered.

Concluding this section let us present the resulting description of the
method of  Puiseux series.

At \textit{the first step} one should find all the Puiseux series (near
finite points and infinity) that satisfy equation  \eqref{ODE_y}.

At \textit{the second step} one uses Theorem \ref{T:Darboux_pols} and Lemmas
\ref{L_mu}, \ref{L_mu2} in order to derive the structure of an irreducible
invariant algebraic curve and its cofactor, see relations \eqref{General_F}
and  \eqref{General_lambda}. Note that at this step all possible combinations
of  Puiseux series near infinity found at the first step should be considered
if one wishes to classify irreducible invariant algebraic curves. Requiring
that the non--polynomial part of expression in brackets in \eqref{General_F}
vanishes, one obtains a system of algebraic equations. This system contains
infinite number of equation. In practice    one takes several first
equations. Note that equations resulting from the fact that the
non--polynomial part of expression in brackets in \eqref{General_lambda}
should vanish can be also considered.

At \textit{the third step} one solves the algebraic system and makes the
verification substituting the resulting  irreducible invariant algebraic
curve and its cofactor into equation \eqref{Inx_Eq}.

\textit{Remark 1.} In the above, we have considered irreducible invariant
algebraic curves $F(x,y)$ such that $F_y(x,y)\not\equiv 0$. Irreducible
invariant algebraic curves of the form $F=F(x)$ can be found substituting
expression  $F=x-x_0$ with $x_0\in\mathbb{C}$ into equation  \eqref{Inx_Eq}
and setting to zero the coefficients at different powers of $y$.

\textit{Remark 2.} If the polynomial vector field under consideration
possesses a rational first integral, then there exist infinite number of
irreducible invariant algebraic curves. In such a situation one needs to
construct  the irreducible invariant algebraic curves $F_1(x,y)$, $\ldots$,
$F_K(x,y)$  such that their cofactors $\lambda_1(x,y)$, $\ldots$,
$\lambda_K(x,y)$ satisfy the following condition \cite{Singer}
\begin{equation}
\begin{gathered}
 \label{Rat_FI}
d_1\lambda_1+\ldots+d_K\lambda_K=0,\quad d_1,\ldots,d_K\in\mathbb{Z}\setminus\{0\}, \quad K\in\mathbb{N}.
\end{gathered}
\end{equation}
All other irreducible invariant algebraic curves are expressible via
$F_1(x,y)$, $\ldots$, $F_K(x,y)$. Recall that a rational function
$R(x,y)\in\mathbb{C}(x,y)\setminus\mathbb{C}$ is a rational first integral of
the vector field $\mathcal{X}$ if $\mathcal{X}R=0$. For another method of
finding rational first integrals, which uses Taylor and Puiseux series, see
the work by A. Ferragut and H. Giacomini \cite{Ferragut01}.

An example of the method of Puiseux series in practice  will be given in
Section~\ref{S:Lienard_2_4}.

\section{Invariant algebraic curves for Li\'{e}nard dynamical systems}\label{S:Lienard}

The polynomial dynamical systems of the form
\begin{equation}
\begin{gathered}
 \label{Lienard_gen}
 x_t=y,\quad  y_t=-f(x)y-g(x)
\end{gathered}
\end{equation}
are known as Li\'{e}nard dynamical systems. In this article we suppose that
$f(x)$ and $g(x)$ are polynomials
\begin{equation}
\begin{gathered}
 \label{Lienard_fg}
f(x)=f_0x^m+\ldots+f_m, \quad g(x)=g_0x^{n}+\ldots+g_{n},\quad f_0 g_0\neq0
\end{gathered}
\end{equation}
with coefficients in the field $\mathbb{C}$. In addition we set $m<n<2m+1$.
The case $n=m+1$ was considered in \cite{Demina06, Demina11}.

Let us prove the following theorem.
\begin{theorem}\label{T1:Lienard_gen}
Let $F(x,y)\in \mathbb{C}[x,y]\setminus\mathbb{C}$ be an irreducible
invariant algebraic curve of  Li\'{e}nard dynamical system
\eqref{Lienard_gen} with $\deg f<\deg g<2\deg f+1$. Then  $F(x,y)$ and its
cofactor take the form
\begin{equation}
\begin{gathered}
 \label{Lienard1_F}
F(x,y)=\left\{\prod_{j=1}^{N-k}\left\{y-y_j(x)\right\}\left\{y-y_N(x)\right\}^{k}\right\}_{+},
\end{gathered}
\end{equation}
\begin{equation}
\begin{gathered}
 \label{Lienard1_cof}
\lambda(x,y)=-Nf-(N-k)q_x-kp_x,
\end{gathered}
\end{equation}
where $k=0$ or $k=1$, $N\in\mathbb{N}$, and $y_1(x)$, $\ldots$, $y_{N}(x)$
are the series
\begin{equation}
\begin{gathered}
 \label{Lienard1_F_series}
(I):\,y_j(x)=q(x)+\sum_{l=0}^{\infty}c_{m+1+l}^{(j)}x^{-l},\quad j=1,\ldots, N-k;\\
(II):\,y_N(x)=p(x)+\sum_{l=0}^{\infty}c_{n-m+l}^{(N)}x^{-l}.\hfill
\end{gathered}
\end{equation}
The coefficients of the series of type (II) and of the polynomials
\begin{equation}
\begin{gathered}
 \label{Lienard1_F_series_q}
q(x)=-\frac{f_0}{m+1}x^{m+1}+\sum_{l=1}^{m} q_{m+1-l} x^l\in\mathbb{C}[x],\\
 p(x)=-\frac{g_0}{f_0}x^{n-m}+\sum_{l=1}^{n-m-1} p_{n-m-l} x^l\in\mathbb{C}[x]
\end{gathered}
\end{equation}
 are uniquely determined.
The coefficients $c_{m+1}^{(j)}$, $j=1,\ldots, N-k$ are pairwise distinct.
All other coefficients
 $c_{m+1+l}^{(j)}$, $l\in\mathbb{N}$ are expressible via $c_{m+1}^{(j)}$, where $j=1,\ldots, N-k$.
 The corresponding product in  \eqref{Lienard1_F} is unit whenever $k=1$ and $N=1$.
\end{theorem}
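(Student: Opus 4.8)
The plan is to run the three-step method of Puiseux series from Section~\ref{S:Puiseux} on the specific system $P(x,y)=y$, $Q(x,y)=-f(x)y-g(x)$, for which equation~\eqref{ODE_y} becomes the Abel-type equation $y\,y_x+f(x)y+g(x)=0$. First I would determine all Puiseux series near $x=\infty$ solving this equation by a dominant-balance (Newton-polygon) analysis of the three monomials $y y_x$, $fy$, $g$, whose leading orders in $x$ are $2r-1$, $m+r$ and $n$ when $y\sim c_0 x^r$. The hypothesis $m<n<2m+1$ is exactly what selects the admissible balances: the balance $yy_x\sim fy$ forces $r=m+1$ (type (I), with $c_0=-f_0/(m+1)$) and requires $n<2m+1$ to keep $g$ subdominant, while the balance $fy\sim g$ forces $r=n-m$ (type (II), with $c_0=-g_0/f_0$) and again requires $n<2m+1$; the remaining balance $yy_x\sim g$ would need $n>2m+1$ and is therefore impossible. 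This produces the two leading behaviours matching $q(x)$ and $p(x)$ in~\eqref{Lienard1_F_series_q}, and because $f,g$ are polynomials in $x$ the whole recursion proceeds in integer steps, so each solution is a Laurent series near infinity.

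Next I would carry out the resonance (Fuchs-index) analysis to pin down how many free parameters each family carries. Linearising the dominant balance of type (I) about $c_0x^{m+1}$, I expect the coefficient multiplying a perturbation $\epsilon x^{m+1-s}$ to be proportional to $s-(m+1)$, so the unique resonance sits at $s=m+1$, i.e. at the constant term $x^0$; this shows the polynomial tail $q(x)$ (powers $x^{m+1},\dots,x^1$) is uniquely determined, the constant $c^{(j)}_{m+1}$ is a free parameter, and every lower coefficient $c^{(j)}_{m+1+l}$ is forced as a function of $c^{(j)}_{m+1}$. For type (II) the linear operator coming from $fy$ has coefficient $f_0\neq0$ at every order (the $yy_x$-contribution falls at strictly lower order because $2(n-m)-1<n$), so there is no positive resonance and all coefficients are uniquely determined; hence there is exactly one type (II) series. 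By Theorem~\ref{T:Darboux_pols} the curve therefore has the shape~\eqref{Lienard1_F} with factors drawn from these two families.

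I would then fix $\mu(x)$ and the multiplicities. Using Lemma~\ref{L_mu} I must rule out finite zeros of $\mu$: near any finite $x_0$ a branch with a pole $y\sim b_0(x-x_0)^{-q}$, $q>0$, makes $yy_x$ of order $(x-x_0)^{-2q-1}$, strictly more singular than $fy$ and $g$, so it cannot be balanced; hence the equation has no Puiseux series with negative leading exponent at a finite point and, by the Consequence of Lemma~\ref{L_mu}, $\mu(x)\equiv1$. For the multiplicity $k$ of the type (II) factor I would use that $F$ is irreducible with $F_y\not\equiv0$: being irreducible in $\mathbb{C}[x,y]$, $F$ is irreducible and hence separable in $\mathbb{C}(x)[y]$, so its roots in $\mathbb{C}_\infty\{x\}$ are pairwise distinct. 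Since only one type (II) series exists it can occur at most once, giving $k\in\{0,1\}$, while the type (I) factors are distinct and are separated precisely by their free constants $c^{(j)}_{m+1}$ (whence these are pairwise distinct); this is also the content of Theorem~\ref{T:uniqueness}.

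Finally the cofactor is a short computation. Applying Theorem~\ref{T_cof} with $\mu\equiv1$ (so $L=0$), the Laurent expansion $\sum_m y_j^m/y^{m+1}=1/(y-y_j)$ near $y=\infty$ gives $\lambda=\big\{\sum_{j}(Q-Py_{j,x})/(y-y_j)\big\}_+$; substituting $P=y$, $Q=-fy-g$ and using that each $y_j$ solves the Abel equation, the numerator $-(y_{j,x}+f)y-g$ is divisible by $y-y_j$ with quotient $-(y_{j,x}+f)$, so $\lambda=-Nf-\sum_{j}y_{j,x}$. Since $\lambda$ is a polynomial, the non-polynomial parts of $\sum_j y_{j,x}$ must cancel, and reading off the polynomial parts $q_x$ and $p_x$ of the derivatives of the two families yields $\lambda=-Nf-(N-k)q_x-kp_x$, which is~\eqref{Lienard1_cof}. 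I expect the main obstacle to be the first two steps: carrying the Newton--Puiseux recursion far enough to prove rigorously that exactly these two families occur, that the recursion never forces fractional exponents, and that the resonance of type (I) genuinely admits a free constant (the compatibility condition at $s=m+1$ must be checked) rather than producing logarithmic terms.
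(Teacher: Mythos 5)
Your proposal follows essentially the same route as the paper's own proof: the same dominant-balance analysis near $x=\infty$ producing exactly the families (I) and (II) under the hypothesis $m<n<2m+1$, the same Fuchs-index conclusion (one free constant at $x^0$ for type (I), all coefficients forced for type (II)), the same use of Lemma \ref{L_mu} to conclude $\mu(x)\equiv 1$ from the absence of negative-exponent Puiseux series at finite points, and the same appeal to irreducibility to get pairwise distinct factors with $k\in\{0,1\}$. Your cofactor computation, via the explicit factorization $Q-Py_{j,\,x}=-(y_{j,\,x}+f)(y-y_j)$, is a concrete and correct version of what the paper does by substituting the series into \eqref{General_lambda}.

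The one genuine omission: you never exclude irreducible invariant algebraic curves with $F_y\equiv 0$. The theorem is stated for an arbitrary irreducible invariant algebraic curve, and its conclusion \eqref{Lienard1_F} (with $\mu\equiv1$, a monic polynomial in $y$ of degree $N\geq 1$) is false for a curve of the form $F=F(x)$, so this case must be ruled out before Theorem \ref{T:Darboux_pols}, Lemma \ref{L_mu}, and Theorem \ref{T_cof} --- all of which assume $F_y\not\equiv 0$ --- can be invoked; your own text simply assumes ``$F$ is irreducible with $F_y\not\equiv0$'' when discussing the multiplicity $k$. The paper handles this in one line: an irreducible curve independent of $y$ must be $F=x-x_0$, and substituting into \eqref{Inx_Eq} with $P=y$ gives $y=\lambda(x,y)(x-x_0)$, which is impossible. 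The patch is trivial, but it is a required step for the theorem as stated.
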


\begin{proof}
Invariant algebraic curves of  dynamical system \eqref{Lienard_gen} satisfy
the following equation
\begin{equation}
\begin{gathered}
 \label{Lienard_gen_main_F}
yF_x-\{f(x)y+g(x)\}F_y=\lambda(x,y) F.
\end{gathered}
\end{equation}
Substituting $F=F(x)$ into this equation, we verify that there are no
invariant algebraic curves that do not depend on $y$.

Let $F(x,y)$ with $F_y\not\equiv0$  be an irreducible invariant algebraic
curve of dynamical system \eqref{Lienard_gen}. Equation \eqref{ODE_y}  now
takes the form
\begin{equation}
\begin{gathered}
 \label{Lienard_y_x}
yy_x+f(x)y+g(x)=0.
\end{gathered}
\end{equation}
For all $ x_0\in\mathbb{C}$ the Puiseux series of the form
\eqref{Puiseux_null} satisfying equation \eqref{Lienard_y_x} have
non--negative exponents at the leading--order terms. Using  Lemma \ref{L_mu},
we find $\mu(x)=\mu_0$, where $\mu_0$ is a constant. Without loss of
generality, we set $\mu_0=1$.

There exist only two dominant balances that produce Puiseux series in a
neighborhood of the point $x= \infty$. The equations related to these
balances and their solutions are the following
\begin{equation}
\begin{gathered}
 \label{Lienard_balances}
(I):\quad y(y_x+f_0x^m)=0,\quad y(x)=-\frac{f_0}{m+1}x^{m+1};\\
(II):\quad x^m(f_0y+g_0x^{n-m})=0,\quad y(x)=-\frac{g_0}{f_0}x^{n-m}.\hfill
\end{gathered}
\end{equation}
In the case $(I)$ the corresponding Puiseux series have one arbitrary
coefficient at $x^0$ provided that the compatibility condition related to the
unique Fuchs  index $l_0=m+1$ is satisfied. Let us recall that the definition
of Fuchs indices depends on the numeration of the series under consideration.
Definitions of dominant balances and Fuchs indices can be found in Appendix,
see also \cite{Conte03, Goriely, Bruno01, Bruno02, Demina07}. In this article
we use the numeration for all the series as given in \eqref{Puiseux_null} and
\eqref{Puiseux_inf}. If the parameters of system \eqref{Lienard_gen} are
chosen in such a way that the compatibility condition is not satisfied, then
the series of type $(I)$ does not exist. All the coefficients of the  series
of type $(II)$ are uniquely determined. Note that the Puiseux series are in
fact Laurent series, they are given in \eqref{Lienard1_F_series}. Factoring
$F(x,y)$ in the ring $\mathbb{C}_{\infty}\{x\}[y]$ and taking the polynomial
part of this factorization, we obtain~\eqref{Lienard1_F}.  Since the
polynomial $F(x,y)$ in ~\eqref{Lienard1_F} is irreducible, we conclude that
the series $y_1(x)$, $\ldots$, $y_{N-k}(x)$ should be pairwise distinct, see
Theorem \ref{T:Darboux_pols}. Thus the irreducibility of $F(x,y)$  requires
the coefficients
  $c_{m+1}^{(j)}$, $j=1,\ldots, N-k$
to be pairwise distinct and $k=0$ or $k=1$.

Finally, substituting $L=0$ ($\mu(x)=1$) and series \eqref{Lienard1_F_series}
into expression \eqref{General_lambda},  we find the  cofactor as given in
\eqref{Lienard1_cof}. This completes the prove.

\end{proof}

\textit{Remark.} If the parameters of system \eqref{Lienard_gen} are chosen
in such a way that the compatibility condition for the series of type $(I)$
is not satisfied, then the unique irreducible invariant algebraic curve takes
the form $F(x,y)=y-p(x)-c_{n-m}^{(N)}$ provided that the series of type
$(II)$ terminates at zero term.

Further, our aim is to show that there exist a dynamical system equivalent to
Li\'{e}nard dynamical system \eqref{Lienard_gen} such that the algebraic
ordinary differential equation related to this new system possesses finite
number of Puiseux series near infinity with respect to one of the variables.
Let us introduce the invertible  change of  variables $x=s$, $y=z+q(s)$
$\leftrightarrow$ $s=x$, $z=y-q(x)$. The polynomial $q(x) $ is  given in
\eqref{Lienard1_F_series_q}. Substituting the series of type $(I)$ into
equation  \eqref{Lienard_y_x}, we find the following recurrence relation
\begin{equation}
\begin{gathered}
 \label{Lienard1_h_l}
q_l=\frac{(m+1)}{(m-l+1)f_0}\left\{\sum_{k=0}^{l-1}q_kf_{l-k}+\sum_{k=1}^{l-1}(m+1-k)q_kq_{l-k}+g_{n+l-(2m+1)}\right\},
\end{gathered}
\end{equation}
where $1\leq l\leq m$, $q_0=-f_0/(m+1)$, $g_{-l}=0$, $l\in\mathbb{N}$ and the
second sum equals zero in the case $l=1$. The new dynamical system takes the
form
\begin{equation}
\begin{gathered}
 \label{Lienard1_alt_DS}
s_t=z+q(s),\quad z_t=-\{q_s(s)+f(s)\}\{z+q(s)\}-g(s).
\end{gathered}
\end{equation}
There exists the one--to--one correspondence between irreducible invariant
algebraic curves $F(x,y)$ of Li\'{e}nard dynamical system
\eqref{Lienard_gen} and irreducible invariant algebraic curves $G(s,z)$ of
system~\eqref{Lienard1_alt_DS}.

\begin{theorem}\label{T2_L_alt}
Suppose $G(s,z)\in \mathbb{C}[s,z]\setminus\mathbb{C}$ is an irreducible
invariant algebraic curve of dynamical system \eqref{Lienard1_alt_DS}, where
the coefficients of the polynomial $q(s) $ are given by \eqref{Lienard1_h_l}.
Then  the degree of $G(s,z)$ with respect to $s$ is either $0$ or $m+1$.
\end{theorem}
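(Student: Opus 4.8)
The plan is to avoid re-running the Puiseux analysis for \eqref{Lienard1_alt_DS} from scratch, and instead to combine the one-to-one correspondence between the irreducible invariant algebraic curves $F(x,y)$ of \eqref{Lienard_gen} and the curves $G(s,z)$ of \eqref{Lienard1_alt_DS} with the explicit structure of $F$ from Theorem \ref{T1:Lienard_gen}. Since $x=s$, $y=z+q(s)$ is a polynomial automorphism of $\mathbb{C}^2$, we have $G(s,z)=F(s,z+q(s))$, and $G$ is irreducible precisely because $F$ is. The whole point of introducing $q(s)$ is that it absorbs the dominant balance of type $(I)$, so that after the substitution the two families of branches of $F$ have a transparent and sharply different behaviour as $s\to\infty$; reading off $\deg_{s}G$ from this behaviour will yield the claim.

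First I would pass to the field $\mathbb{C}_{\infty}\{s\}$ and use the complete factorization underlying \eqref{Lienard1_F}, which by Theorem \ref{T1:Lienard_gen} has $\mu\equiv1$, namely
\[
F(s,y)=\prod_{j=1}^{N-k}\{y-y_j(s)\}\,\{y-y_N(s)\}^{k}.
\]
Substituting $y=z+q(s)$ gives a factorization of $G$ over $\mathbb{C}_{\infty}\{s\}[z]$,
\[
G(s,z)=\prod_{j=1}^{N-k}\{z-z_j(s)\}\,\{z-z_N(s)\}^{k},\qquad z_j(s)=y_j(s)-q(s).
\]
The essential observation is the contrasting growth of the two branch types. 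For a branch of type $(I)$ the polynomial $q(s)$ cancels exactly, leaving $z_j(s)=c_{m+1}^{(j)}+c_{m+2}^{(j)}s^{-1}+\dots$, which is bounded and tends to $c_{m+1}^{(j)}$ as $s\to\infty$. For the type $(II)$ branch $z_N(s)=p(s)-q(s)+\dots$, and here the hypothesis $\deg g<2\deg f+1$, i.e. $n-m<m+1$, is used in an essential way: it says $\deg p<\deg q$, so the term $-q(s)$ dominates and $z_N(s)\sim\frac{f_0}{m+1}s^{m+1}$, of degree $m+1$ in $s$, with no cancellation since the two leading powers are distinct.

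From here the degree is read off from the leading asymptotics in $s$ at a generic fixed $z$. Each type $(I)$ factor satisfies $z-z_j(s)\to z-c_{m+1}^{(j)}$, contributing order $s^{0}$, whereas the type $(II)$ factor gives $z-z_N(s)\sim-\frac{f_0}{m+1}s^{m+1}$, contributing order $s^{m+1}$. Hence
\[
G(s,z)\sim\Big(-\tfrac{f_0}{m+1}\Big)^{k}\Big(\prod_{j=1}^{N-k}\{z-c_{m+1}^{(j)}\}\Big)\,s^{k(m+1)},\qquad s\to\infty,
\]
so that $\deg_{s}G=k(m+1)$. Since $k\in\{0,1\}$ by Theorem \ref{T1:Lienard_gen}, this degree is either $0$ or $m+1$, which is the assertion.

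The step needing genuine care is this last one: one must check that the displayed leading coefficient is not identically zero, so that it truly records $\deg_{s}G$. This is immediate, because $f_0\neq0$ and the coefficients $c_{m+1}^{(1)},\dots,c_{m+1}^{(N-k)}$ are pairwise distinct by Theorem \ref{T1:Lienard_gen}, so $\prod_{j}\{z-c_{m+1}^{(j)}\}$ is a nonzero polynomial in $z$. In the case $k=0$ this forces the elementary symmetric functions of the bounded series $z_j(s)$, being polynomials in $s$ bounded as $s\to\infty$, to be constants, so that $G=G(z)$ has $s$-degree $0$; the degenerate case $k=1$, $N=1$ gives the empty product and $G=z-z_N(s)$ of $s$-degree $m+1$. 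The only conceptual obstacle — which the change of variables has already dissolved — is that in the original coordinates the type $(I)$ branches themselves grow like $s^{m+1}$ and would obscure the count; subtracting $q(s)$ is exactly what renders them bounded and isolates the single type $(II)$ branch as the sole source of the degree $m+1$. (Equivalently, one may phrase the argument through the $x\leftrightarrow y$ analogue of Theorem \ref{T:Darboux_pols} applied to \eqref{Lienard1_alt_DS}: the $m+1$ conjugate Puiseux series $s\sim(\tfrac{(m+1)z}{f_0})^{1/(m+1)}$ near $z=\infty$ form a single orbit, which an irreducible $G$ must contain in full or not at all.)
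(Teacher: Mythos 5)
Your proof is correct, but it follows a genuinely different route from the paper's. The paper does not invoke Theorem \ref{T1:Lienard_gen} at all: it analyzes system \eqref{Lienard1_alt_DS} directly with the roles of the variables exchanged, treating $s$ as a function of $z$. From the degree estimates \eqref{Lienard_fgq} it shows that the resulting equation \eqref{Lienard_alt_sz} admits a single dominant balance at $z=\infty$, namely $f_0s^{m+1}-(m+1)z=0$, producing $m+1$ conjugate Puiseux series in powers of $z^{1/(m+1)}$; the representation \eqref{Lienard_alt_rep} of $G$ over $\mathbb{C}_{\infty}\{z\}$ must then contain either none or all of these conjugates (a proper nonempty subproduct carries fractional powers of $z$ and cannot have polynomial coefficients), giving $\deg_s G\in\{0,\,m+1\}$ --- precisely the alternative you relegate to your final parenthetical remark. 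Your main argument instead pulls $G$ back through the automorphism $s=x$, $z=y-q(x)$ and reads off $\deg_s G$ from the growth of the branches furnished by Theorem \ref{T1:Lienard_gen}: the type (I) branches become bounded after subtracting $q$, while the (at most one) type (II) branch grows like $\frac{f_0}{m+1}s^{m+1}$ because $\deg p=n-m<m+1=\deg q$, whence $\deg_s G=k(m+1)$ with $k\in\{0,1\}$. This is sound and non-circular, since Theorem \ref{T1:Lienard_gen} is proved before and independently of Theorem \ref{T2_L_alt}. Comparing the two: the paper's proof is self-contained for the transformed system and makes visible the ramified orbit at $z=\infty$ whose irreducibility over $\mathbb{C}[z]$ forces the degree to jump from $0$ directly to $m+1$; yours is shorter given Theorem \ref{T1:Lienard_gen}, relies on the one-to-one correspondence the paper states just before the theorem (legitimate, the change of variables being a triangular polynomial automorphism that preserves irreducibility), and yields as a by-product the refinement that $\deg_s G=0$ exactly when $k=0$, i.e. when $F$ omits the type (II) series --- which is the content of the paper's subsequent Lemma \ref{L:Lienard_gen_add}. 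One small point of rigor: your asymptotic evaluation at generic fixed $z$ presupposes convergence of the Laurent branches near $s=\infty$ (guaranteed for algebraic functions, as the paper notes citing Walker); alternatively the same count follows purely formally by comparing leading exponents in the symmetric functions of the $z_j(s)$.
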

\begin{proof}
Let $G(s,z)$  be an irreducible invariant algebraic curve    of dynamical
system \eqref{Lienard1_alt_DS}. In what follows we regard $s$ as a dependent
variable and $z$ as an  independent. Our aim is to find the representation of
$G(s,z)$ in $\mathbb{C}_{\infty}\{z\}[s]$. The equation for the function
$s(z)$ reads as
\begin{equation}
\begin{gathered}
 \label{Lienard_alt_sz}
[\{q_s(s)+f(s)\}z+\{q_s(s)+f(s)\}q(s)+g(s)]s_z+q(s)+z=0.
\end{gathered}
\end{equation}
Using the definition of the polynomials $f$, $g$, and $q$, we obtain the
following relations
\begin{equation}
\begin{gathered}
 \label{Lienard_fgq}
\deg(\{q_s(s)+f(s)\}q(s)+g(s))\leq m,\\
 \deg(q_s(s)+f(s))\leq m-1,\, \deg q(s)=m+1.
\end{gathered}
\end{equation}
It follows from these relations  that equation  \eqref{Lienard_alt_sz}
possesses only one dominant balance producing Puiseux series in a
neighborhood of the point $z=\infty$. The equation related to this balance
and its solutions take the form
\begin{equation}
\begin{gathered}
 \label{Lienard_alt_dominant}
f_0s^{m+1}-(m+1)z=0,\quad s^{(k)}(z)=b_0^{(k)}z^{1/(m+1)},\quad k=1,\ldots, m+1,
\end{gathered}
\end{equation}
where $b_0^{(1)}$, $\ldots$, $b_0^{(m+1)}$ are distinct roots of the equation
$f_0 b_0^{m+1}-(m+1)=0$. The balance \eqref{Lienard_alt_dominant} is
algebraic. We find $m+1$ distinct Puiseux series in a neighborhood of the
point $z=\infty$. The representation of $G(s,z)$ in the ring
$\mathbb{C}_{\infty}\{z\}[s]$ is the following
\begin{equation}
\begin{gathered}
 \label{Lienard_alt_rep}
G(s,z)=\nu(z)\prod_{k=1}^{m+1}\{s-b_0^{(k)}z^{1/(m+1)}-\ldots\}^{n_k},
\end{gathered}
\end{equation}
where $\nu(z)\in\mathbb{C}[z]$ and $n_k=0$ or $n_k=1$. Since $G(s,z)$ should
be a polynomial, we obtain
 from representation  \eqref{Lienard_alt_rep} that  the degree of $G(s,z)$ with respect to $s$ is either $0$
 ($n_k=0$, $k=1,\ldots,m+1$) or $m+1$ ($n_k=1$, $k=1,\ldots,m+1$).
\end{proof}

The latter theorem is very important for applications, because the bound on
the degrees of irreducible invariant algebraic curves established in this
theorem can be used to find all irreducible invariant algebraic curves  of
systems \eqref{Lienard_gen} and \eqref{Lienard1_alt_DS} in explicit form. It
follows from Theorem \ref{T:uniqueness2} that the number of distinct
irreducible  invariant algebraic curves of   dynamical system
\eqref{Lienard1_alt_DS} is finite. The same is true for Li\'{e}nard dynamical
systems \eqref{Lienard_gen}, see Theorem \ref{T:L_two_curves} below.

The   change of  variables $x=s$, $y=z+q(s)$ $\leftrightarrow$ $s=x$,
$z=y-q(x)$ allows us to make a refinement in Theorem \ref{T1:Lienard_gen}.

\begin{lemma}\label{L:Lienard_gen_add}
Suppose we are in  assumptions of Theorem \ref{T1:Lienard_gen}. If  an
irreducible  invariant algebraic curve of  Li\'{e}nard dynamical system
\eqref{Lienard_gen} with $\deg f<\deg g<2\deg f+1$ does not capture  the
Puiseux series of type ($II$), then the polynomial $F(x,y)$ is of first
degree with respect to $y$.
\end{lemma}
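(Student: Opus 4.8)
The plan is to pass to the equivalent system \eqref{Lienard1_alt_DS} via the polynomial change of variables $x=s$, $y=z+q(s)$ and to exploit the special form that the type~$(I)$ series acquire there. First I would observe that the hypothesis that $F$ does not capture the series of type~$(II)$ means precisely that $k=0$ in the representation of Theorem~\ref{T1:Lienard_gen}: indeed $y_N$ is the only series of type~$(II)$ and it enters $F$ with multiplicity $k$. Consequently, recalling that $\mu(x)=1$, in the field $\mathbb{C}_{\infty}\{x\}$ one has the exact factorization $F(x,y)=\prod_{j=1}^{N}\{y-y_j(x)\}$, cf.\ \eqref{General_F_new}, with every $y_j(x)$ of type~$(I)$, see \eqref{Lienard1_F_series}. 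Here $N=\deg_y F$, and the goal is to prove $N=1$.

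Next I would transport $F$ to the curve $G(s,z)=F(s,z+q(s))$ of system~\eqref{Lienard1_alt_DS}, which is legitimate because of the one-to-one correspondence between the irreducible invariant algebraic curves of \eqref{Lienard_gen} and of \eqref{Lienard1_alt_DS}. Substituting $y=z+q(s)$ into the factorization gives, in $\mathbb{C}_{\infty}\{s\}$,
\begin{equation*}
G(s,z)=\prod_{j=1}^{N}\{z-\zeta_j(s)\},\qquad \zeta_j(s)=y_j(s)-q(s)=\sum_{l=0}^{\infty}c_{m+1+l}^{(j)}s^{-l}.
\end{equation*}
The decisive structural feature is that each $\zeta_j(s)$, being a type~$(I)$ series with its polynomial head $q(s)$ removed, is an ordinary power series in $s^{-1}$; in particular it stays bounded as $s\to\infty$.

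The key step is then to read off $\deg_s G$. Expanding $G$ as a polynomial in $z$, its coefficients are, up to sign, the elementary symmetric functions $e_i(\zeta_1,\dots,\zeta_N)$, which are again power series in $s^{-1}$ and hence bounded at $s=\infty$. On the other hand these coefficients are genuine polynomials in $s$, and a polynomial bounded at infinity is constant. Therefore every coefficient is constant, $G\in\mathbb{C}[z]$, and $\deg_s G=0$; this is exactly the first alternative permitted by Theorem~\ref{T2_L_alt}. I expect this boundedness argument---the observation that clearing $q(x)$ turns all captured type~$(I)$ series into bounded functions of $s$---to be the crux, the remaining steps being formal.

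Finally, since $F$ is irreducible and $x=s$, $y=z+q(s)$ is a polynomial automorphism of the plane, the image $G$ is irreducible in $\mathbb{C}[s,z]$. An irreducible polynomial that lies in $\mathbb{C}[z]$ must have degree one over the algebraically closed field $\mathbb{C}$, whence $N=\deg_z G=\deg_y F=1$, which is the assertion of the lemma.
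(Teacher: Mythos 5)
Your proposal is correct and follows essentially the same route as the paper: the paper's proof also passes to system \eqref{Lienard1_alt_DS} via $z=y-q(x)$, asserts that $k=0$ forces the transformed curve $G(s,z)$ to be independent of $s$, and concludes $G=z-z_0$, hence $F=y-q(x)-z_0$. Your symmetric-function/boundedness argument simply makes explicit the justification that the paper leaves implicit for why $G$ cannot depend on $s$.
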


\begin{proof}
Any irreducible  invariant algebraic curve $F(x,y)$ given by
\eqref{Lienard1_F} with $k=0$ corresponds to an irreducible invariant
algebraic curve $G(s,z)$ of dynamical system  \eqref{Lienard1_alt_DS} such
that the polynomial $G(s,z)$ does not depend on $s$. The latter, if exists,
takes the form $G(s,z)=z-z_0$ with $z_0\in\mathbb{C}$.   Consequently,
irreducible invariant algebraic curves of dynamical system
\eqref{Lienard_gen}, if any, are of the form $F(x,y)=y-q(x)-z_0$ provided
that $k=0$.
\end{proof}

\begin{theorem}\label{T:L_two_curves}
Li\'{e}nard dynamical systems \eqref{Lienard_gen} with $\deg f<\deg g<2\deg
f+1$ and fixed coefficients of the polynomials $f(x)$ and $g(x)$  have at
most two distinct irreducible invariant algebraic curves simultaneously.
\end{theorem}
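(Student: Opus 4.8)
The plan is to partition all irreducible invariant algebraic curves of \eqref{Lienard_gen} according to the value $k\in\{0,1\}$ appearing in Theorem \ref{T1:Lienard_gen}, and to bound each of the two classes by a single curve. Theorem \ref{T1:Lienard_gen} asserts that every irreducible invariant algebraic curve $F(x,y)\in\mathbb{C}[x,y]\setminus\mathbb{C}$ has the form \eqref{Lienard1_F} with $k=0$ or $k=1$ (the $y$-independent case having already been excluded in the proof of that theorem by substituting $F=F(x)$ into \eqref{Lienard_gen_main_F}). Hence, once I show there is at most one curve of each type, the desired bound of two follows immediately, since the two classes are disjoint and exhaustive.

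For the class $k=1$, I would note that any such curve contains the factor $\{y-y_N(x)\}$ in \eqref{Lienard1_F}, so it captures the type (II) series, i.e.\ $F(x,y_N(x))=0$. By Theorem \ref{T1:Lienard_gen} the coefficients of the type (II) series are uniquely determined, so $y_N(x)$ is one fixed Puiseux series of the form \eqref{Puiseux_inf}. Theorem \ref{T:uniqueness} then applies directly to this single series and shows that at most one irreducible invariant algebraic curve can capture it. Therefore there is at most one curve with $k=1$.

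For the class $k=0$, Lemma \ref{L:Lienard_gen_add} reduces every such curve to the linear form $F(x,y)=y-q(x)-z_0$ for some $z_0\in\mathbb{C}$, where $q(x)$ is the fixed polynomial \eqref{Lienard1_F_series_q}. I would substitute this $F$ into \eqref{Lienard_gen_main_F} using $F_x=-q_x$ and $F_y=1$; comparing degrees in $y$ forces the cofactor to be independent of $y$, and matching the coefficient of $y$ gives $\lambda(x)=-(f+q_x)$, while the $y$-free part yields the polynomial identity $(f+q_x)(q+z_0)=-g$. Because $f+q_x\not\equiv 0$ — otherwise $g\equiv 0$, contradicting $g_0\neq 0$ — this identity pins down $z_0$ uniquely: if two values $z_0^{(1)}\neq z_0^{(2)}$ both satisfied it, subtracting would give $(z_0^{(1)}-z_0^{(2)})(f+q_x)\equiv 0$ and hence $f+q_x\equiv 0$. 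Thus there is at most one curve with $k=0$.

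Combining the two bounds gives at most two distinct irreducible invariant algebraic curves, as claimed. I do not anticipate a serious obstacle, since the two structural inputs — the normal form \eqref{Lienard1_F} and the linearity of $k=0$ curves — are already furnished by Theorem \ref{T1:Lienard_gen} and Lemma \ref{L:Lienard_gen_add}. The one point that demands care is the $k=1$ count: Theorem \ref{T:uniqueness} must be applied to a \emph{single} series with uniquely determined coefficients, which is exactly why the type (II) series is the right object to feed it, whereas the type (I) series carry the free parameter $c_{m+1}^{(j)}$ and form a whole family, so invoking uniqueness through them would be illegitimate.
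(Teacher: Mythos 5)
Your proposal is correct and follows essentially the same route as the paper: the same partition into $k=1$ and $k=0$ classes, with Theorem \ref{T:uniqueness} applied to the uniquely determined type (II) series for $k=1$, and Lemma \ref{L:Lienard_gen_add} plus substitution of $y=q(x)+z_0$ for $k=0$. Your explicit identity $(f+q_x)(q+z_0)=-g$ and the linearity-in-$z_0$ argument is just a cleaner spelling-out of the paper's terser claim that this substitution "finds the unique value $z_0$."
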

\begin{proof}
First of all we recall that Li\'{e}nard dynamical systems \eqref{Lienard_gen}
have no invariant algebraic curves that do not depend on $y$. Further,  it
follows from Theorem \ref{T:uniqueness} that there exists at most one
irreducible invariant algebraic curve with $k=1$ in representation
\eqref{Lienard1_F}.

Using Lemma \ref{L:Lienard_gen_add}, we see that if $k=0$ in representation
\eqref{Lienard1_F}, then $N=1$. In this case irreducible invariant algebraic
curves, if any, can be written as $F(x,y)=y-q(x)-z_0$. If such curves exist,
then algebraic ordinary differential equation  \eqref{Lienard_y_x} should
have a polynomial solution of the form $y(x)=q(x)+z_0$. Substituting this
function into the equation in question and using the fact that the
coefficients of the polynomials $f(x)$, $g(x)$, and $q(x)$ are uniquely
determined, we find the unique value $z_0$. The same result can be obtained
if we recall that invariant algebraic curves in the case $k=0$ and $N=1$ can
be obtained requiring that the series of type ($I$) terminates at zero term.
Analyzing the recurrence relation for the coefficients of this series, which
is similar to  \eqref{Lienard1_h_l}, we see that the first appearance of
$c_{m+1}$ in the expressions for $c_{m+1+l}$, $l\in\mathbb{N}$ is with
non--zero coefficient and unit exponent.

Let us sum up our results: if the two distinct irreducible invariant
algebraic curves exist, then the first has $k=1$ in representation
\eqref{Lienard1_F} and the second is of first degree with respect to $y$ and
takes the form $F(x,y)=y-q(x)-z_0$.

\end{proof}

\textit{Consequence.} Li\'{e}nard dynamical systems \eqref{Lienard_gen} with
$\deg f<\deg g<2\deg f+1$  are not integrable with a rational first integral.

\section{Examples}\label{S:Lienard_2_4}

As an example, we shall consider  the Li\'{e}nard dynamical system of type
$\deg f=2$, $\deg g=4$:
\begin{equation}
\begin{gathered}
 \label{Lienard1_DS24_old}
x_t=y,\quad y_t=-(\zeta x^2+\beta x+\alpha)y-(\varepsilon x^4+\xi x^3+ex^2+\sigma x+\delta),\quad  \zeta\varepsilon\neq0.
\end{gathered}
\end{equation}
The change of variables $x\mapsto X(x+x_0)$, $y\mapsto Yy$, $T\mapsto Tt$,
$XYT\neq 0$ relates system \eqref{Lienard1_DS24_old} with its simplified
version at $\zeta=3$, $\varepsilon=-3$, and $\beta=0$. Thus without loss of
generality, we obtain the dynamical system
\begin{equation}
\begin{gathered}
 \label{Lienard1_DS24}
x_t=y,\quad y_t=-(3 x^2+\alpha)y+(3 x^4-\xi x^3-ex^2-\sigma x-\delta).
\end{gathered}
\end{equation}
The complete set of irreducible invariant algebraic curves for dynamical
system \eqref{Lienard1_DS24} is rather cumbersome. We shall present results
for the dynamical systems with $\xi=-2$. We have chosen this family because
there exist  irreducible invariant algebraic curves of degrees (with respect
to $y$) higher than $2$. It seems that the classification of irreducible
invariant algebraic curves for families of Li\'{e}nard dynamical system of
type $\deg f=2$, $\deg g=4$   is presented here for the first time.

\begin{theorem}\label{T_L24_1}
There unique irreducible invariant algebraic curves of  Li\'{e}nard dynamical
system \eqref{Lienard1_DS24} with $\xi=-2$ are those given in Table
\ref{Tb:L_2_4_1}.
\end{theorem}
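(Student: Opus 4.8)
The plan is to run the three-step method of Section~\ref{S:Puiseux} for the family \eqref{Lienard1_DS24} with $\xi=-2$, where $m=2$ and $n=4$, so that $m+1=3$ and $n-m=2$. The decisive simplification is to pass at once to the equivalent system \eqref{Lienard1_alt_DS} in the coordinates $s=x$, $z=y-q(x)$ and to invoke Theorem~\ref{T2_L_alt}: any irreducible invariant algebraic curve $G(s,z)$ has degree $0$ or $3$ in $s$. This dichotomy organises the whole classification. If $\deg_s G=0$ then, by irreducibility, $G=z-z_0$, which corresponds to the curve $F=y-q(x)-z_0$ linear in $y$; if $\deg_s G=3$ then $G=\nu(z)\prod_{k=1}^{3}\{s-s^{(k)}(z)\}$ with $\nu(z)\in\mathbb{C}[z]$ and $s^{(1)},s^{(2)},s^{(3)}$ the three Puiseux series \eqref{Lienard_alt_dominant} near $z=\infty$. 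Theorems~\ref{T1:Lienard_gen} and~\ref{T:L_two_curves} guarantee that at most one curve of each kind can occur, so the task is to determine, as a function of the parameters $(\alpha,e,\sigma,\delta)$, exactly which ones do.

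The linear case is routine. First I would compute $q(x)=-x^3+q_1x^2+q_2x$ explicitly from the recurrence \eqref{Lienard1_h_l}, fixing $q_1,q_2$ in terms of the parameters. The line $z=z_0$ is invariant for \eqref{Lienard1_alt_DS} precisely when $(q_s(s)+f(s))(z_0+q(s))+g(s)\equiv0$ as a polynomial in $s$; since $(q_s+f)q+g$ already has degree at most $m=2$ by \eqref{Lienard_fgq}, this is a system of three scalar equations for the single unknown $z_0$, equivalently obtained by substituting $y=q(x)+z_0$ into \eqref{Lienard_y_x}. Its solvability is one compatibility relation on $(\alpha,e,\sigma,\delta)$, and when it holds it fixes $z_0$ uniquely.

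The cubic case is the heart of the matter. Here the degree $N$ of $F$ in $y$ equals $\deg_z G$, and expanding the elementary symmetric functions of the three branches shows that $N=1+\deg\nu$: the curves of degree higher than two, which is the phenomenon this example is meant to exhibit, are exactly those with $\deg\nu\ge2$. I would therefore (i) compute the branches $s^{(k)}(z)$ to sufficiently many orders through the Newton--Puiseux recurrence; (ii) locate the finite points $z_0$ at which equation \eqref{Lienard_alt_sz} admits a branch $s(z)$ with negative leading exponent, since by Lemma~\ref{L_mu} these are the only possible zeros of $\nu(z)$, and cap the multiplicity of each such zero, hence $\deg\nu$ and $N$, by Lemma~\ref{L_mu2}; and (iii) require that the polynomial part of $\nu(z)\prod_{k}\{s-s^{(k)}(z)\}$ be the whole expression, i.e. that every negative power of $z$ cancel. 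Equivalently, and more convenient for the concluding verification, I would posit $G(s,z)=\sum_{i=0}^{3}a_i(z)s^i$ with polynomial coefficients of the degrees just bounded and substitute into the invariance identity for \eqref{Lienard1_alt_DS} with a cofactor of the form dictated by \eqref{Lienard1_cof}, matching successive powers of $s$ to obtain a finite algebraic system for the $a_i$ and the parameters.

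The main obstacle is solving that system and certifying that its only solutions are the ones recorded in Table~\ref{Tb:L_2_4_1}. The difficulty is that $N$ is not a single fixed number across the family, so the computation must be stratified by $\deg\nu$: a non-constant $\nu$ forces the finite-point branch analysis to produce algebraic rather than exponential behaviour, which happens only when the degree of $(q_s+f)q+g$ drops below its generic value, i.e. on special loci in $(\alpha,e,\sigma,\delta)$. I would work upward from small $\deg\nu$, at each stage turning the cancellation requirements into explicit polynomial equations in the parameters and splitting into the resulting sub-cases. Finally I would substitute every candidate curve together with its cofactor back into \eqref{Inx_Eq} to confirm invariance, and appeal to Theorem~\ref{T:uniqueness} to exclude two curves of the same kind, thereby showing that the list in Table~\ref{Tb:L_2_4_1} is correct and exhaustive.
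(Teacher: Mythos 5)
Your overall strategy is viable and is essentially the ``dual'' of the paper's own proof. The paper stays in the original variables $(x,y)$: it writes $F$ through Theorem \ref{T1:Lienard_gen} as the polynomial part of a product of $N-k$ series of type (I) and $k\in\{0,1\}$ series of type (II) (formulas \eqref{Lienard24_PS1}--\eqref{Lienard24_cof}), and splits on whether the compatibility condition $e=3(27+6\xi-4\alpha)/4$ for the type-(I) series holds; your ``degree of $(q_s+f)q+g$ drops below its generic value'' locus is exactly that condition, and your trichotomy $\deg_s G=0$, $\deg\nu=0$, $\deg\nu\ge 1$ matches the paper's cases $k=0$, $k=1$ with $N=1$, and $k=1$ with $N>1$. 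Where you genuinely differ is the computational engine for the hard case $N>1$: you propose the $(s,z)$-ansatz $G=\sum_{i=0}^{3}a_i(z)s^i$ justified by Theorem \ref{T2_L_alt}, whereas the paper extracts necessary conditions from the coefficient of $y^{N-1}$ in the $\mathbb{C}_{\infty}\{x\}$-representation, namely the relations $a_{l+2}+\sum_{j=1}^{N-1}c^{(j)}_{l+3}=0$ of \eqref{Lienard24_alt_rep_rel1}, rewrites them in the power sums $C_m=\{c_3^{(1)}\}^m+\ldots+\{c_3^{(N-1)}\}^m$, solves the first fifteen of them, and then verifies sufficiency by substitution into \eqref{Inx_Eq}. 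Your route buys a finite ansatz in one variable and a clean final verification; the paper's route buys a uniform treatment of all degrees $N$ at once, which is what actually delivers the classification.

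That last point is where your proposal has a concrete gap: the plan to ``work upward from small $\deg\nu$'' has no termination criterion. Lemma \ref{L_mu2} only caps the multiplicity of each individual zero of $\nu$; it does not bound the number of distinct zeros, so nothing in your argument bounds $\deg\nu=N-1$ a priori, and checking $\deg\nu=0,1,2,\ldots$ case by case can never certify that Table \ref{Tb:L_2_4_1} is exhaustive --- which is precisely what the word ``unique'' in the statement demands. The paper circumvents this by treating $N$ as an unknown inside a single algebraic system: since the cancellation conditions are symmetric functions of the $c_3^{(j)}$ (equivalently, of the zeros of your $\nu$), they become polynomial equations in $N-1$, the power sums $C_m$, and the parameters, and solving the resulting overdetermined system simultaneously determines the admissible values of $N$, the $C_m$, and the parameter loci. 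To close your argument you should import this device --- your cancellation requirements are symmetric in the zeros of $\nu$, so the power-sum reformulation applies verbatim in the $(s,z)$ picture --- or else supply an independent a priori bound on $\deg\nu$.
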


\begin{table}[t]
       \begin{tabular}[pos]{|l| c| l|}
        \hline
        \textit{Invariant algebraic curves} & \textit{Cofactors} & \textit{Parameters}\\
        \hline
        $ $ & $ $ & $ $\\
        $ y-x^2+\frac13(\alpha+e)$ & $-3x^2-2 x-\alpha$ & $ \delta=\frac{\alpha(e+\alpha)}{3}$, \\
        $ $ &  $ $ & $ \sigma=\frac{2(e+\alpha)}{3}$\\
        $ y+x^3+\frac32x^2+(\alpha-\frac52) x+\frac{5\alpha}{6}-\frac{\sigma}{3}-\frac{25}{12}$ & $3x-\frac52$ & $\delta=\frac{25\alpha}{12}-\frac{5\sigma}{6}-\frac{125}{24}$, \\
        $ $ & $ $ & $ e=\frac{45}{4}-3\alpha $\\
        $ y^2+(x^3+\frac12x^2-\frac{37}{4}x+\frac{35}{8})y-x^5-\frac{3}{2}x^4$ & $-3x^2+x+\frac{17}{4} $ &
        $ \alpha=-\frac{27}4$, $ e=\frac{63}{2}$,\\
        $ +\frac{35}{2}x^3+\frac{37}{4}x^2-\frac{1365}{16}x+\frac{1225}{32}$ & $ $ & $\delta=-\frac{595}{16}$, $\sigma=-\frac{9}{2}$\\
       $ y^2+(x^3+\frac{1}{2}x^2+\frac{11}{4}x+\frac{51}{8})y-x^5-\frac{3}{2}x^4$ & $-3x^2+x-\frac{31}{4} $ &
        $ \alpha=\frac{21}4$, $ e=-\frac{9}{2}$,\\
        $ -\frac52x^3-\frac{35}{4}x^2-\frac{69}{16}x+\frac{153}{32}$ & $ $ & $\delta=\frac{93}{16}$, $\sigma=-\frac{17}{2}$\\
        $ y^2+(x^3+\frac{1}{2}x^2-\frac{95}{12}x+\frac{1505}{216})y-x^5$ & $-3x^2+x+\frac{35}{12} $ &
        $ \alpha=-\frac{65}{12}$, $ e=\frac{55}{2}$,\\
        $ -\frac{3}{2}x^4+\frac{275}{18}x^3+\frac{335}{108}x^2-\frac{31175}{432}x+\frac{508475}{7776}$ & $ $ & $\delta=-\frac{11825}{432}$, \\
        $ $ &  $ $ & $\sigma=-\frac{185}{18}$\\
        $ y^3+(2x^3+2x^2-\frac12x+\frac{35}{2})y^2+(x^6+x^5$ & $-3x^2+4x-\frac{29}{4} $ &
        $ \alpha=\frac{9}{4}$, $ e=\frac{9}{2}$,\\
        $-\frac54x^4 +\frac{39}{2}x^3+\frac{103}{16}x^2-\frac{335}{16}x+\frac{7325}{64})y
        $ & $ $ & $\delta=\frac{145}{16}$, $\sigma=-\frac{39}{2}$\\
         $-x^8-3x^7+\frac12x^6-\frac{63}{4}x^5-47x^4+\frac{519}{16}x^3$ &  $ $ & $ $\\
                            $ -\frac{2065}{32}x^2-\frac{16325}{64}x+\frac{36625}{256} $ &  $ $ & $ $\\
                            $ $ &  $ $ & $ $\\
                            \hline
                            $ $ &  $ $ & $ $\\
                             $ y^2+(x^3+\frac12x^2+\{\alpha-\frac52\}x+\frac{5}{6}\{5$ & $-3x^2+x $ &
        $ \alpha^2-\frac{4975}{486}\alpha+\frac{4225}{144}$  \\
        $-\frac1{27}\alpha\}) y -x^5-\frac{3}{2}x^4+\frac{5}{12}\{15-4\alpha\}x^3$ & $-\alpha-\frac{5}{2} $ &
        $=0$, $ e=\frac{45}{4}-3\alpha$,  \\
         $+\{\frac{35}{24}-\frac{187}{162}\alpha\}x^2+\{\frac{11327}{14580}\alpha-\frac{353}{108}\}x $ &  $ $ & $\delta=\frac{5101\alpha}{14580}+\frac{193}{54}$ \\
                              $+\frac{131545}{34992}-\frac{93553}{472392}\alpha $ &  $ $ & $\sigma=\frac{\alpha}9-\frac{15}{4} $\\
                               $ $ &  $ $ & $ $\\
                              \hline
    \end{tabular}
    \caption{Irreducible invariant algebraic curves of dynamical system \eqref{Lienard1_DS24} with  $\xi=-2$. The curves in the last row have complex--valued coefficients provided that the coefficients of the original system are from $\mathbb{R}$.} \label{Tb:L_2_4_1}
\end{table}

\begin{proof}
While proving this theorem we shall use results of Theorem
\ref{T1:Lienard_gen} and Lemma \ref{L:Lienard_gen_add}. The Puiseux series
given in  \eqref{Lienard1_F_series} are now the following
\begin{equation}
\begin{gathered}
 \label{Lienard24_PS1}
(I):\quad y(x)=-x^3-\frac{3}{2}x^2+\left(\xi-\alpha+\frac{9}{2}\right)x+c_3+\sum_{l=1}^{\infty}c_{l+3}x^{-l};\hfill\\
(II):\quad y(x)=x^2-\frac{1}{3}(\xi+2)x+\frac13\left(\xi+2-\alpha-e\right)+\sum_{l=1}^{\infty}a_{l+2}x^{-l}.
\end{gathered}
\end{equation}
The Puiseux series ($I$) has an arbitrary coefficient $c_3$ and exists
whenever $e=3(27+6\xi-4\alpha)/4$. The Puiseux series ($II$) possesses
uniquely determined coefficients. Note that we use novel designations for the
coefficients of the Puiseux series ($II$). The general structure of
irreducible invariant algebraic curves and their cofactors take the form
\begin{equation}
\begin{gathered}
 \label{Lienard24_F}
F(x,y)=\left\{\prod_{j=1}^{N-k}\left\{y+x^3+\frac{3}{2}x^2-\left(\xi-\alpha+\frac{9}{2}\right)x-c^{(j)}_3-\ldots\right\}\right.\\
\left.\times \left\{y-x^2-\ldots\right\}^{k}\right\}_{+},
\end{gathered}
\end{equation}
\begin{equation}
\begin{gathered}
 \label{Lienard24_cof}
\lambda(x,y)=-3kx^2+(3N-5k)x+\frac16(8\xi+31-6\alpha)k-\frac12(2\xi+9)N,
\end{gathered}
\end{equation}
where $k=0$ or $k=1$, $N\in\mathbb{N}$.  If $e\neq3(27+6\xi-4\alpha)/4$, then
it follows from Theorem \ref{T1:Lienard_gen} that the dynamical system under
consideration possesses only one irreducible invariant algebraic curve
\begin{equation}
\begin{gathered}
 \label{Lienard24_Invc_1}
F(x,y)=y-x^2+\frac{1}{3}(\xi+2)x-\frac13\left(\xi+2-\alpha-e\right),\\
 \lambda(x,y)=-3x^2-2x-\alpha+\frac13(\xi+2)
\end{gathered}
\end{equation}
existing whenever the series of type ($II$) terminates at zero term. This
gives the following restrictions on the parameters
\begin{equation}
\begin{gathered}
 \label{Lienard24_Invc_1con}
\delta=\frac{1}{9}(\xi+2-\alpha-e)(2-3\alpha+\xi),\,\sigma=\frac19(12\alpha+6e+3\xi\alpha-\xi^2-10\xi-16).
\end{gathered}
\end{equation}
Further, we set $e=3(27+6\xi-4\alpha)/4$. If $k=0$ in relation
\eqref{Lienard24_F}, then it follows from Lemma \ref{L:Lienard_gen_add} that
$N=1$ and the irreducible invariant algebraic curve takes the form
\begin{equation}
\begin{gathered}
 \label{Lienard24_Invc_2}
F(x,y)=y+x^3+\frac{3}{2}x^2-\left(\xi-\alpha+\frac{9}{2}\right)x-\frac13(\sigma+\xi^2-\xi\alpha)\\
 -\frac14(27+12\xi-6\alpha),\quad \lambda(x,y)=3x-\xi-\frac92
\end{gathered}
\end{equation}
provided that series ($I$) terminates at zero term. This series terminates
under the condition
\begin{equation}
\begin{gathered}
 \label{Lienard24_Invc_2con}
\delta=\frac{1}{24}(2\xi+9)(18\alpha+4\xi\alpha-4\sigma-4\xi^2-36\xi-81).
\end{gathered}
\end{equation}

Now we consider the case $k=1$ and $N>1$. We calculate several first
coefficients of the Puiseux series under consideration. Requiring that the
non--polynomial part of the
 expression in brackets in  \eqref{Lienard24_F}  vanishes, we obtain necessary conditions for invariant algebraic curves to exist.
Deriving the coefficients at $y^{N-1}x^{-l}$, $l\in\mathbb{N}$ in the
representation
 of $F(x,y)$ in the field $\mathbb{C}_{\infty}\{x\}$ yields the following
 relations
  \begin{equation}
\begin{gathered}
 \label{Lienard24_alt_rep_rel1}
a_{l+2}+\sum_{j=1}^{N-1} c^{(j)}_{l+3}=0,\quad l\in\mathbb{N}.
\end{gathered}
\end{equation}
 It is convenient to introduce the variables $C_m=\{c_3^{(1)}\}^m+\ldots+\{c_3^{(N-1)}\}^m$,
 where $m\in\mathbb{N}$.  Setting $\xi=-2$, we use the algorithms of algebraic geometry, including the method of resultants
 and Gr\"{o}bner bases, to solve this system.
  In practice we need the first fifteen
equations  given by  \eqref{Lienard24_alt_rep_rel1}  in order to find all
other irreducible invariant algebraic curves. In particular, we obtain that
$N=2$ and $N=3$. Note that there exist solutions with non--natural values of
$N$. We exclude them.
 Sufficiency we verify by direct substitution into
 equation \eqref{Inx_Eq}. The results are gathered in Table~\ref{Tb:L_2_4_1}.

 Concluding the proof let us mention that the upper bound on the
degrees of irreducible invariant algebraic curves for  Li\'{e}nard dynamical
system \eqref{Lienard1_DS24} with $\xi=-2$ is equal to $8$ while the upper
bound with respect to the variable $y$ equals $3$.

\end{proof}





\section{Conclusion}

In this article we have derived an explicit expression for the cofactor
related to an irreducible invariant algebraic curve of a polynomial dynamical
systems in the plane. We have investigated uniqueness properties of
irreducible invariant algebraic curves for planar polynomial dynamical
systems.  We have presented the general structure of irreducible invariant
algebraic curves and their  cofactors   for Li\'{e}nard dynamical systems
with $\deg f<\deg g<2\deg f+1$. We have proved that Li\'{e}nard dynamical
systems  in question can have  at most two distinct irreducible invariant
algebraic curves simultaneously. The method of the present article can be
generalized to the case of non--autonomous dynamical systems in the plane
\cite{Demina09} and higher dimensional dynamical systems \cite{Demina12}.

\section{Acknowledgments}

The author would like to thank the reviewers for their helpful comments and
suggestions.

\section{Appendix}

Let us describe a method, which can be used to perform the classification of
Puiseux series satisfying an algebraic first--order ordinary differential
equation $E(x,y,y_x)=0$. The left--hand side of this expression can be
regarded as the sum of differential monomials given~by
\begin{equation}
\begin{gathered}\label{Diff_mon}
M[y(x),x]=Cx^ly^{j_0}\left\{\frac{d y}{dx}\right\}^{j_1},\quad C\in\mathbb{C}\setminus\{0\},\quad l,j_0,j_1\in\mathbb{N}_0.
\end{gathered}
\end{equation}
The set of all the differential monomials of the form \eqref{Diff_mon} will
be denoted as $\mathbb{M}$. In order to simplify notation the expression
$W[x, y(x)]$ will stand for a polynomial in $x$, $y(x)$, and $y_x(x)$ with
coefficients from the field $\mathbb{C}$.

Let us define the map $q:$ $\mathbb{M} \rightarrow \mathbb{R}^2$ by  the
following rules
\[
Cx^{q_1}y^{q_2} \mapsto q=(q_1,q_2), \qquad
\frac{d^ky}{dx^k} \mapsto q=(-k,1),\quad q(M_1M_2)=q(M_1)+q(M_2),
\]
where $C\in \mathbb{C}\setminus\{0\}$ is a constant, $M_1$ and $M_2$ are
differential monomials. We denote the set of all points $q\in \mathbb{R}^2$
corresponding to the differential monomials of equation  $E(x,y,y_x)=0$ as
$S(E)$. The convex hull of $S(E)$ is known as \textit{the Newton polygon} of
the equation   under consideration.

The boundary of the Newton polygon consists of vertices and edges. Selecting
all the differential monomials of the original equation that generate the
vertices and the edges of the Newton polygon, we obtain a number of balances.
The balance for a vertex is defined as the sum of those differential
monomials in $E(x,y,y_x)$ that are mapped into the vertex. The balance for an
edge is defined as the sum of differential monomials in $E(x,y,y_x)$ whose
images belong to the edge.  If solutions of the equation $E(x,y,y_x)=0$
possess an asymptotics of the form $y(x)=c_0x^r$ with $x\rightarrow 0$ or
$x\rightarrow \infty$, then there exists a balance $W[x,y(x)]$ such that the
function $y(x)=c_0x^r$ satisfies the equation $W[x,y(x)]=0$. Conversely, the
function $y(x)=c_0x^r$ solving equation $W[x,y(x)]=0$, where $W[x,y(x)]$ is a
balance, is an asymptotics at $x\rightarrow 0$ (or $x\rightarrow \infty$) for
solutions of equation \eqref{ODE_y} whenever for all the differential
monomials $M[x,y(x)]$ of the original equation not involved into  $W[x,y(x)]$
we have $\text{Re}\,\varkappa>\text{Re}\,\varkappa_0$ (or
$\text{Re}\,\varkappa<\text{Re}\,\varkappa_0$), where
$M[x,c_0x^r]=Bx^{\varkappa}$ and $M_0[x,c_0x^r]=B_0x^{\varkappa_0}$ with
$M_0[x,y(x)]$ being a differential monomial of the balance $W[x,y(x)]$.

 Thus, having found all the power solutions $y(x)=c_0x^r$ for all the balances,
 one needs to select those that give asymptotics at $x\rightarrow 0$ or $x\rightarrow \infty$.
 Using power asymptotics it is possible to derive asymptotic series possessing these asymptotics as leading--order terms~\cite{Bruno01, Bruno02}.
In this article we are mainly interested in Puiseux series near $x=\infty$
that satisfy equation \eqref{ODE_y}, therefore we shall focus at the case
$r\in\mathbb{Q}$ and  $x\rightarrow \infty$. Let us suppose that a balance
$W[y(x),x]$ of the equation $E(x,y,y_x)=0$ has a solution $y(x)=c_0x^r$,
which is an asymptotics  at $x\rightarrow \infty$ and $r\in\mathbb{Q}$. In
what follows we do not consider convergence of asymptotic Puiseux series.
Note that the differentiation in the fields $\mathbb{C}_{x_0}\{x\}$ and
$\mathbb{C}_{\infty}\{x\}$ is defined as a formal operation with most of the
properties similar to those valid for convergent Puiseux series.

In order to obtain the structure  of the corresponding series one should find
the  G\^{a}teaux derivative of the balance $W[y(x),x]$ at the solution
$y(x)=c_0x^{r}$:
\[
\begin{gathered}
\frac{\delta W}{\delta y}[c_0x^{r}]=\lim_{s\rightarrow 0 }\frac{W[c_0x^{r}+sx^{r-p},x]-W[c_0x^{r},x]}{s}=V(p)x^{\tilde{r}}.
\end{gathered}
\]
In this expression $V(p)$ is a first--degree polynomial with respect to $p$.
The coefficients of this polynomial depend on $c_0$ and on the parameters (if
any) of the original equation involved into the balance $W[y(x),x]$. The zero
$p_0$ of $V(p)$ is called \textit{the Fuchs index} (or \textit{the
resonance}) of the balance $W[y(x),x]$ and its power solution
$y(x)=c_0x^{r}$. Let lcm$(n,m)$ be the lowest common multiple of two integer
numbers $n$ and $m$.
If the Fuchs index $p_0$ is not a positive rational number, then the number
$n_0$ in expression \eqref{Puiseux_inf} is given by $n_0=r_2$ where $r_2$ is
defined as $r=r_1/r_2$ with $r_1$ and $r_2$ being coprime numbers, $r_1\in
\mathbb{Z}$ and $r_2\in \mathbb{N}$. Otherwise we obtain
$n_0=\text{lcm}(g_2,r_2)$, where $r_2$ was defined previously and $g_2$ is
given by $p_0=g_1/g_2$ with coprime natural numbers  $g_1$ and $g_2$.

Finally, it is important to verify the existence of the Puiseux series of the
form \eqref{Puiseux_inf} with $l_0=rn_0$. If the balance $W[y(x),x]$
corresponds to a vertex of the Newton polygon, then the Puiseux series always
exists and possesses an arbitrary coefficient $c_0$. In this case the Fuchs
index is equal to zero. Now let us suppose that the balance $W[y(x),x]$
corresponds to an edge of the Newton polygon. Substituting series
\eqref{Puiseux_inf} into the equation $E(x,y,y_x)=0$ one can find the
recurrence relation for its coefficients. This relation takes  the form
\[
\begin{gathered}
V\left(\frac{k}{n_0}\right)c_k=U_k(c_0,\ldots,c_{k-1}),\quad k\in\mathbb{N},
\end{gathered}
\]
where $U_k$ is a polynomial of its arguments. Note that $U_k$ can also depend
on the parameters (if any) of the original equation. The equation
$U_{n_0p_0}=0$ is called \textit{the compatibility condition}. If  the
compatibility condition is not satisfied, then the Puiseux series under
consideration does not exist. Otherwise the corresponding  Puiseux series
exists and possesses an
 arbitrary coefficient $c_{n_0p_{0}}$. Consequently, we conclude that the
 Puiseux series in question has uniquely determined coefficients provided that there are no non--negative rational Fuchs indices.

We note that if one wishes to find all the Puiseux series of the form
\eqref{Puiseux_inf} that satisfy the original equation, then it is necessary
to implement the procedure described above for all the dominant balances and
for all their power solutions $y(x)=c_0x^r$ with $r\in\mathbb{Q}$ and
$x\rightarrow\infty$.

We also observe that there may exist balances and their power solutions such
that $V(p)\equiv0$. If $V(p)$ is identically zero, then one should make the
substitution $y(x)=c_0x^{r}+w(x)$ in equation $E(x,y,y_x)=0$ and find all the
Puiseux series $w(x)=c_1x^{r_1}+\ldots$ of the latter such that $r_1<r$,
$r_1\in\mathbb{Q}$ and $x\rightarrow\infty$.

If one wishes to perform the classification of Puiseux series near finite
non--zero points satisfying equation $E(x,y,y_x)=0$, then the following
variables $w(s)=y(s+x_0)$ and $s=x-x_0$ should be introduced. Making this
substitution  yields another first--order algebraic ordinary differential
equation. The method described above can be used to find balances near the
point $s=0$ for the new equation. Some examples of this method in practice
are given, for instance, in \cite{Bruno02} and \cite{Demina01, Demina02}.

\bibliography{ref_MethodPSC}

\end{document}